\documentclass{amsart}

\usepackage{amssymb} \usepackage{amsfonts} \usepackage{amsmath}
\usepackage{amsthm} \usepackage{epsfig} 
\usepackage{color}
\usepackage{pinlabel}
\usepackage{amscd}
\usepackage[all]{xy}
\usepackage{pinlabel}
\usepackage{tcolorbox}
\usepackage{seqsplit}
\usepackage{accents}

\usepackage{tikz}
\usepackage{xcolor}

\newtheorem{lemma}{Lemma}
\newtheorem{teo}[lemma]{Theorem}

\newtheorem{cor}[lemma]{Corollary} 
\newtheorem{conj}[lemma]{Conjecture}

\theoremstyle{definition}
\newtheorem{defn}[lemma]{Definition}

\newtheorem{example}[lemma]{Example}

\theoremstyle{remark}

\newcommand{\matr} [4] {\big({\tiny\begin{array}{@{}c@{\ }c@{}} #1 & #2 \\ #3 & #4 \\ \end{array}} \big)}

\newcommand{\interior}[1]{{\rm int}(#1)}

\newcommand{\PSL}{{\rm PSL}}
\newcommand{\SL}{{\rm SL}}
\newcommand{\Nil}{{\rm Nil}}
\newcommand{\Sol}{{\rm Sol}}
\newcommand{\tr}{{\rm tr}}
\newcommand{\isom}{\cong}

\newcommand{\matX}{\ensuremath {\mathbb{X}}}

\newcommand{\matR} {\ensuremath {\mathbb{R}}}
\newcommand{\matQ} {\ensuremath {\mathbb{Q}}}
\newcommand{\matZ} {\ensuremath {\mathbb{Z}}}
\newcommand{\matC} {\ensuremath {\mathbb{C}}}

\newcommand{\matH} {\ensuremath {\mathbb{H}}}

\newcommand{\matCP} {\ensuremath {\mathbb{CP}}}

\newcommand{\SO} {\ensuremath {{\rm SO}}}
\newcommand{\id} {\ensuremath {{\rm id}}}

\newcommand{\Isom} {\ensuremath {{\rm Isom}}}

\author{Bruno Martelli}

\title{Geometrisation of 3-manifolds}

\begin{document}

\begin{abstract}
The geometrisation theorem of 3-manifolds was conjectured by Thurston the 1980s and proved by Perelman in the 2000s. This is an overview on the subject. We explain the content of the theorem and describe its effects in various situations.
\end{abstract}

\maketitle

\section*{Introduction}
The geometrisation theorem of 3-manifolds was conjectured by Thurston in a famous paper \cite{Thu} published by the Bulletin of the AMS in 1982, and then proved by Perelman in a sequence of three papers \cite{Per1, Per3, Per2} posted on the arXiv in 2002 and 2003. The theorem may be stated essentially in two versions. The most complete one is

\begin{center}
\emph{Every 3-manifold decomposes canonically into geometric pieces.} 
\end{center}

A more focused version says

\begin{center}
\emph{Every 3-manifold is hyperbolic unless there are some obvious obstructions.} 
\end{center}

Each sentence contains a few terms that need to be carefully defined. The aim of this paper is to explain the content of both versions, and to describe geometrisation in action in various situations. We will not say a single word on Perelmann's proof, that is admittedly far outside of the author's expertise. 

{\bf Acknowlegments.} Some figures are taken from Wikipedia Commons: Figures \ref{regular:fig} and \ref{rhombic:fig} were made by Cyp and Figure \ref{satellite:fig} was made by RyBu.

\section{The theorem}

The most complete version of the geometrisation theorem for 3-manifolds says

\begin{center}
\emph{Every 3-manifold decomposes canonically into geometric pieces.} 
\end{center}

We explain the terminology. Every 3-manifold here is tacitly assumed to be compact, connected, orientable, smooth, and possibly with boundary consisting of a disjoint union of tori. A \emph{decomposition} is a two steps operation, where we first \emph{cut} the manifold along a canonical family of \emph{spheres}, and then we cut it again along a canonical family of \emph{tori}, see Figure \ref{surfaces:fig}. We will explain these two steps carefully.
A \emph{geometric piece} is a 3-manifold whose interior may be equipped with 

\begin{center}
\emph{a complete locally homogeneous Riemannian metric.}
\end{center}

Here \emph{locally homogeneous} means that any two points in the manifold have isometric neighbourhoods. There are in fact 8 possible types of such metrics, and we will describe them soon.

We now explain all the terminology and the results stated in detail. In the meanwhile we also introduce some fundamental concepts concerning 3-manifolds.

\begin{figure}
 \begin{center}
\centering
\labellist
\small\hair 2pt
\endlabellist
  \includegraphics[width = 6 cm]{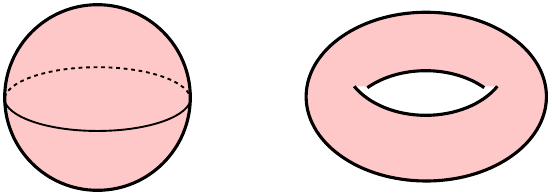}
 \end{center}
 \caption{A sphere and a torus. These are the two orientable connected compact surfaces without boundary with $\chi \geq 0$. Every 3-manifold is cut canonicaly along some spheres and tori.}  \label{surfaces:fig}
\end{figure}

\subsection{Three-manifolds and their submanifolds}
As already said above, every 3-manifold $M$ considered in this paper is tacitly assumed to be smooth, compact, orientable, and with (possibly empty) boundary that consists of a finite union of tori. The reason for admitting tori and not other kinds of surfaces is that tori fit very well in the geometrisation perspective. 
Note that we have $\chi(M)=0$. 

Every submanifold $N\subset M$ in a 3-manifold $M$ is tacitly assumed to be compact, without boundary, and contained in the interior of $M$. 
If $N$ is orientable, its tubular neighbourhood $\nu N$ is always trivial! If $N$ is a 1-manifold we get $\nu N \isom N \times \matR^2$, and if $N$ is an orientable surface we get $\nu N \isom N \times \matR$. 
Life is easier in dimension three, and much more complicated in dimension four, where orientable surfaces in orientable 4-manifolds may have (and often have) non-trivial tubular neighbourhoods: for instance, the tubular neighbourhood of a line in $\matCP^2$ is not trivial.

\subsection{Knots and links}
A 1-dimensional submanifold $L \subset M$ of a 3-manifold $M$ is called a \emph{link}. It is diffeomorphic to a finite union of circles, and if it consists of a single circle it is called a \emph{knot}. Mathematicians have started to classify knots in $S^3$ or $\matR^3$ up to isotopy since the XIX century, most notably when Tait started to produce tables like the one shown in Figure \ref{Knot_table:fig}. Considering knots in $S^3$ or $\matR^3$ is pretty much the same, and we prefer $S^3$ because it is compact.

\begin{figure}
 \begin{center}
\centering
  \includegraphics[width = 9 cm]{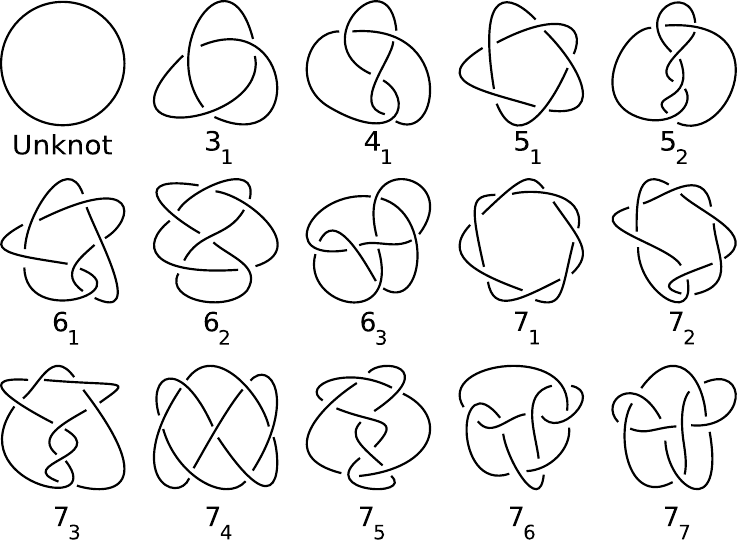}
 \end{center}
 \caption{The 15 knots in $S^3$ that can be described using a planar diagram with at most 7 crossings, considered up to isotopies and reflections. The knots $3_1$ and $4_1$ are called the \emph{trefoil knot} and the \emph{figure eight knot}. }  \label{Knot_table:fig}
\end{figure}

\begin{figure}
 \begin{center}
\centering
  \includegraphics[width = 3 cm]{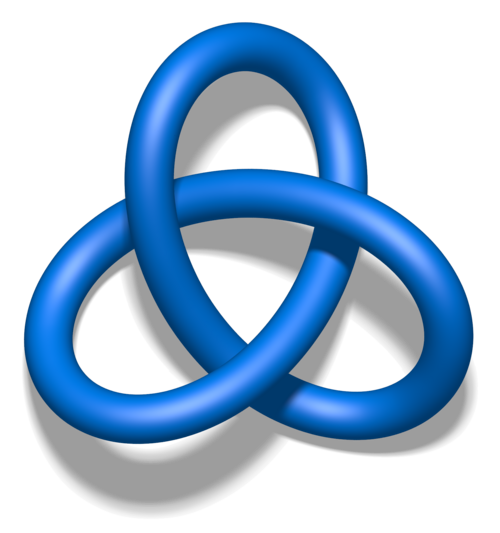}
\hspace{1 cm}
  \includegraphics[width = 3 cm]{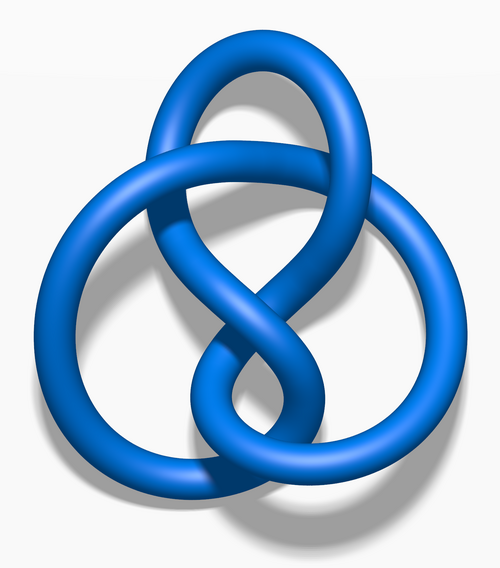}
   \end{center}
 \caption{The closed tubular neighbourhood of a knot is a (knotted) solid torus $S^1 \times D^2$. Here we show the closed tubular neighbourhoods of the trefoil and the figure eight knots. }  \label{knots:fig}
\end{figure}

By what said above, the tubular neighbourhood of any knot $K\subset M$ is diffeomorphic to $K \times \matR^2 \isom S^1 \times \matR^2$. It is sometimes preferable to consider the smaller \emph{closed tubular neighborhood} $S^1 \times D^2 \subset S^1 \times \matR^2$, that is compact and diffeomorphic to the \emph{solid torus} $S^1 \times D^2$. Here $D^k \subset \matR^k$ denotes the unit disc. The boundary of a solid torus is the torus $S^1 \times S^1$. See some examples in Figure \ref{knots:fig}.

\begin{figure}
 \begin{center}
\centering
  \includegraphics[width = 12 cm]{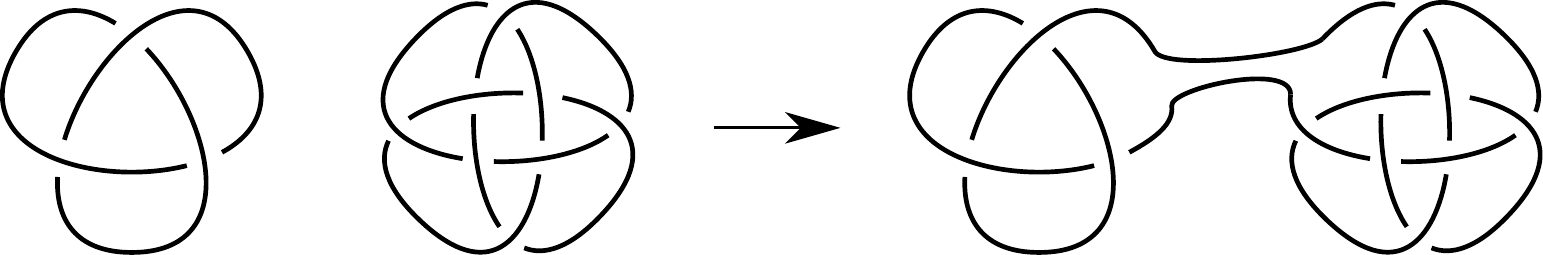}
 \end{center}
 \caption{The connected sum of two knots. }  \label{connected_sum:fig}
\end{figure}

The \emph{connected sum} of two knots is defined as in Figure \ref{connected_sum:fig}. If we consider oriented knots up to isotopy, this operation is well-defined and gives to this set the structure of a commutative monoid where the unknot is the identity element. A knot is \emph{prime} if it cannot be obtained as the connected sum of two non-trivial knots, and the \emph{prime factorization theorem} for knots ensures that every oriented knot may be described in a unique way as the connected sum of finitely many oriented prime knots: this is very much like the unique factorization theorem for natural numbers, and as a consequence when we study knots we typically restrict our attention to prime knots. The theorem was proved by Schubert \cite{Schu} in 1949.

Prime knots have been tabulated since the XIX century. The knots that can be described via a planar diagram with at most 7 crossings are those in Figure \ref{Knot_table:fig}. The two very recent last steps are Burton's classification of the prime knots up to 19 crossings \cite{Ben} and Thistlethwaite's improvement up to 20 crossings \cite{This}. The numbers of distinct (that is, not isotopic) prime knots are shown in Table \ref{knots:table}.

\begin{table}
\begin{center}
\begin{tabular}{c|ccccccccccccc}
$c$ & 3 & 4 & 5 & 6 & 7 & 8 & 9 & 10 & 11 & 12 & 13 & 14 & 15 \\
\hline
$n$ & 1 & 1 &	2	& 3	& 7	& 21	& 49	& 165	& 552	& 2176	& 9988	& 46972	& 253293	
\end{tabular}
\begin{tabular}{c|ccccc}
$c$ & 16 & 17 & 18 & 19 & 20 \\
\hline
$n$ & 1388705	& 8053393	& 48266466	& 294130458	& 1847319428
\end{tabular}
\end{center}
\vspace{.3 cm}
 \caption{The number $n$ of prime knots that can be described via a planar diagram with $c$ crossings, with $c\leq 20$.}  \label{knots:table}
\end{table}


The \emph{exterior} of a link $L\subset S^3$ is the 3-manifold $M = S^3 \setminus \nu L$ obtained by removing from $S^3$ a small open tubular neighbourhood $\nu L$ of $L$. The exterior $M$ has one torus boundary component for each component of $L$. By Alexander's duality we have $H_1(M) = \matZ^k$ where $k$ is the number of components of $L$. A famous theorem of Gordon and Luecke \cite{GL} ensures that two knots are isotopic if and only if they have diffeomorphic exteriors. This is not true for links with at least two components.

\subsection{Incompressible surfaces}
A 3-manifold $M$ contains many surfaces, that can be manipulated in various ways. Let $S \subset M$ be an orientable connected surface. Recall that by assumption $S$ is compact, without boundary, and contained in the interior of $M$. We have $\chi(S) = 2-2g$ where $g$ is the genus of $S$. We suppose that $g \geq 1$, so $S$ is not a sphere. 

If there is a disc $D$ in $M$ that intersects $S$ only in its boundary as in Figure \ref{incompressible:fig}-(left), we can \emph{compress} $S$ as in the figure and produce as a result a new surface $S'$ still contained in $M$. If $S$ is connected, the new surface $S'$ may have either one or two components. We have $\chi(S') = \chi(S) + 2$ in either case, and from this we deduce that each component of $S'$ has genus not larger than $g$. Moreover, if the genus of some component of $S'$ is exactly $g$, there are two components in $S'$, and the other component is a sphere: this case holds precisely when $\partial D$ bounds another disc $D'$ contained in $S$, as in Figure \ref{incompressible3:fig}.


\begin{figure}
 \begin{center}
\centering
  \includegraphics[width = 9 cm]{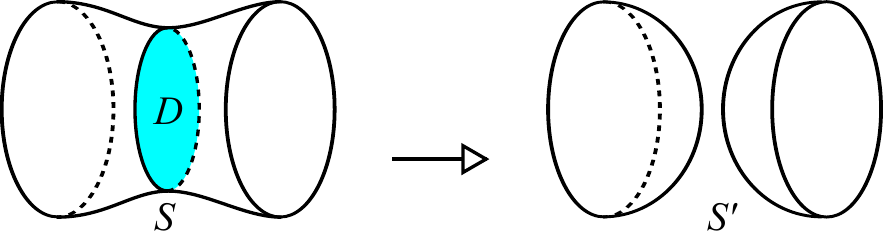} 
 \end{center}
 \caption{A compression of a surface. }  \label{incompressible:fig}
\end{figure}

\begin{figure}
 \begin{center}
\centering
  \includegraphics[width = 10 cm]{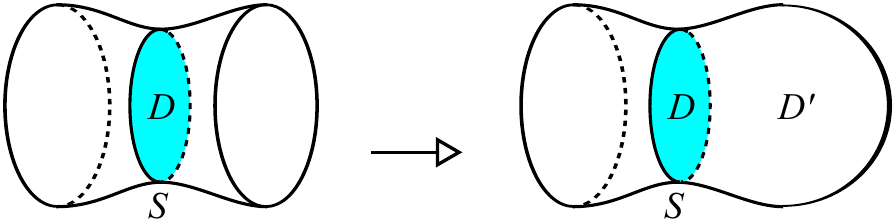} \\
 \end{center}
 \caption{An incompressible surface. }  \label{incompressible3:fig}
\end{figure}

We say that $S$ is \emph{incompressible} if it cannot be simplified by any compression. More precisely, we require that if there is a compressing disc $D$ as in Figure \ref{incompressible3:fig}-(left), then there is a disc $D' \subset S$ with $\partial D = \partial D'$ as in Figure \ref{incompressible3:fig}-(right). By what just said, this means that by compressing $S$ along any disc $D$ we would never get a surface $S'$ whose components have each genus smaller than $g$. There is no way to transform $S$ into simpler surfaces by any compression. 

The following is the first important theorem about 3-manifolds. Note that the inclusion $S \hookrightarrow M$ induces a homomorphism $\pi_1(S) \to \pi_1(M)$ of fundamental groups.

\begin{lemma}[Dehn's Lemma] An orientable connected surface $S \subset M$ of genus $g \geq 1$ is incompressible if and only if the map $\pi_1(S) \to \pi_1(M)$ is injective.
\end{lemma}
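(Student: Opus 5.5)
The statement has two directions of very different difficulty, and I would treat them separately.

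\emph{Injectivity implies incompressibility.} Let $D$ be a compressing disc with $D \cap S = \partial D$. The loop $\gamma = \partial D$ bounds $D$ in $M$, so it is null-homotopic in $M$. If $\pi_1(S) \to \pi_1(M)$ is injective, $\gamma$ is then null-homotopic in $S$. On an orientable surface, a simple closed curve that is null-homotopic bounds an embedded disc $D' \subset S$. This is a standard fact (Epstein), and one way to see it is to note that an essential simple closed curve either is non-separating, hence nonzero in $H_1(S)$, or separates $S$ into two pieces of positive genus and so represents a nontrivial commutator. Therefore $\partial D = \partial D'$, which is exactly incompressibility as defined above.

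\emph{Incompressibility implies injectivity.} This is the real content, the classical loop theorem combined with Dehn's lemma (Papakyriakopoulos). I would argue by contraposition. Suppose some nontrivial $\gamma \in \pi_1(S)$ maps to the trivial element of $\pi_1(M)$. Then there is a continuous map $f\colon D^2 \to M$ with $f(\partial D^2) = \gamma$. By general position I may assume $f$ is transverse to $S$. Then $f^{-1}(S)$ consists of $\partial D^2$ together with finitely many disjoint circles in the interior. Next I would cut along an innermost circle $c$ of $f^{-1}(S)$, which bounds a subdisc $E$ meeting $S$ only in its boundary. There are two cases. If $f(c)$ is nontrivial in $\pi_1(S)$, I replace $\gamma$ by $f(c)$ and $f$ by $f|_E$. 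If $f(c)$ is trivial, I redefine $f$ on the disc bounded by $c$ so that it maps into $S$ and then push it slightly off $S$, which removes $c$. Induction on the number of circles then produces a map of a disc $f\colon D^2 \to M$ with $f^{-1}(S) = \partial D^2$ and with $f|_{\partial D^2}$ essential in $S$. Cutting $M$ along $S$, this becomes a singular disc in a $3$-manifold $N$ whose boundary loop is essential in the boundary component coming from $S$.

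The main obstacle is to upgrade this singular disc to an embedded one whose boundary is still essential in $S$. That is exactly the loop theorem. I would follow Papakyriakopoulos' tower construction. Take a regular neighbourhood of $f(D^2)$ and pass to successive connected double covers until the top of the tower has no such covers, which forces the relevant boundary components to be spheres, where embedded discs exist by elementary means. Then project back down the tower, performing cut-and-paste at each stage to keep the disc embedded and its boundary essential. Since this paper is an overview, in the writeup I would state this step as the cited classical theorem rather than redo the tower argument. The embedded disc $D$ it produces is a compressing disc whose boundary does not bound a disc in $S$, contradicting incompressibility.
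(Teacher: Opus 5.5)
The paper does not prove this lemma. It states it, attributes it to Dehn and Papakyriakopoulos, and only remarks that it is subtler than it looks, so there is no argument in the text to compare yours with.

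Your proposal is the standard proof, and it is correct. The easy direction rests on the fact that a null-homotopic simple closed curve on a closed orientable surface bounds a disc. The hard direction uses the innermost-circle argument to reduce to a singular disc in $M$ cut along $S$ whose boundary is essential in $S$. The loop theorem, which is the Papakyriakopoulos result the paper cites, then applies.

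Two details should be tightened if you write it out.

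First, since $f(\partial D^2)\subset S$, the map cannot be transverse to $S$ along $\partial D^2$ in the usual sense. Begin by homotoping $f$ so that a collar of $\partial D^2$ goes into one side of a product neighbourhood $S\times(-1,1)$, and then make it transverse elsewhere. When you eliminate a trivial circle $c$, push the new disc to the side where the annulus just outside $c$ already lies, so that no new intersection circle appears.

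Second, a separating essential curve cutting off a subsurface of genus $h$ is a product of $h$ commutators, not a single commutator, and its non-triviality needs a word. For example, use the amalgam $\pi_1(S)=\pi_1(S_1)*_{\langle\gamma\rangle}\pi_1(S_2)$: here $\gamma$ is non-trivial in each free factor because each $S_i$ has positive genus, and the factors inject into the amalgam.
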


The Lemma was famously stated and ``proved'' by Dehn \cite{Dehn} in 1910. The proof however contained a serious gap that was then solved by Papakyriakopoulos \cite{Papa} in 1957. Dehn's lemma is in fact more subtle than it might seem! If we replace inclusion $S \subset M$ with any map, it transforms into a well-known old conjecture, that has been verified only for some classes of 3-manifolds:

\begin{conj}[Simple Loop Conjecture]
Let $f\colon S \to M$ be a map from an orientable surface to a 3-manifold. If $f_* \colon \pi_1(S) \to \pi_1(M)$ is not injective, there is an element in $\ker f_*$ that is represented by a simple (that is, embedded) loop.
\end{conj}

\subsection{Essential surfaces}
A 3-manifold $M$ contains plenty of surfaces, but only few of them are really interesting for us. An orientable connected surface $S \subset M$ is \emph{essential} if the following holds:

\begin{enumerate}
\item If $S$ is a sphere, it does not bound a three-dimensional disc $D^3$ in $M$;
\item If $S$ has genus $g\geq 1$, it is incompressible;
\item If $S$ is a torus, it is not isotopic to a component of $\partial M$.
\end{enumerate}

We have cited Dehn's lemma as the first important theorem on 3-manifolds. The following is arguably the second, proved by Alexander \cite{Alex} in 1924.

\begin{teo}[Alexander's Theorem]
There are no essential surfaces in $S^3$.
\end{teo}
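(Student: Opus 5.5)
There are two things to check: essential spheres and essential surfaces of genus $g \geq 1$. The second case is the easier one and follows from Dehn's Lemma. Since $S^3$ is simply connected, $\pi_1(S^3)$ is trivial. For a connected orientable surface $S$ of genus $g\geq 1$, however, $\pi_1(S)$ is nontrivial. So the map $\pi_1(S)\to\pi_1(S^3)$ is never injective, and by Dehn's Lemma $S$ is compressible, hence not essential. Condition (3) never arises, because $\partial S^3=\emptyset$.

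The real content is therefore the sphere case: every smoothly embedded sphere $S\subset S^3$ bounds a 3-disc. This is the classical Alexander theorem, and I would prove it by induction on complexity. First I would remove a point of $S^3$ not on $S$, so that $S\subset \matR^3$. Then I would perturb $S$ so that the height function $h=x_3$ restricted to $S$ is Morse, with distinct critical values. The horizontal planes at regular values $t_1<\dots<t_n$, one between each pair of consecutive critical values, cut $S$ into pieces. Each piece is a planar surface whose boundary circles are the intersections $S\cap\{x_3=t_i\}$, and each piece contains exactly one critical point. The complexity of $S$ is the number of critical points of $h|_S$, together with the number of circles in $S\cap\bigcup_i\{x_3=t_i\}$.

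For the inductive step, pick an innermost circle $\gamma$ in one of these planes $P$. It bounds a disc $D\subset P$ with $\operatorname{int} D\cap S=\emptyset$. Compress $S$ along $D$: this replaces $S$ by two spheres $S_1,S_2$, each obtained from one half of $S\setminus\gamma$ together with a parallel copy of $D$. After pushing the copies of $D$ slightly off $P$, the spheres $S_1,S_2$ are disjoint and each has strictly smaller complexity. By induction each bounds a ball, and I then need to reassemble. The two balls $B_1,B_2$ are either disjoint, in which case $S$ bounds $B_1\cup B_2$ glued along a thickened disc, or nested, in which case $S$ bounds $B_1\setminus B_2$ union a $1$-handle's complement. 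In both cases the union is again a ball, since attaching a ball to a ball along a disc in their boundaries yields a ball.

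The base case is a sphere with no circles in the level planes and only two critical points, a minimum and a maximum. Such a sphere is the boundary of the region it sweeps out: each level set is a single circle, which bounds a disc by the Schoenflies theorem in the plane. So $S$ bounds a ball, foliated by discs with degenerate ends. To pass back to $S^3$, note that a sphere bounding a ball in $\matR^3$ bounds a ball in $S^3$.

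I expect the main obstacle to be the bookkeeping in the inductive step. One has to check that compressing along an innermost disc genuinely decreases the complexity while keeping $h$ Morse on the new spheres. One also has to handle the reassembly cases, nested versus disjoint balls, carefully. In particular it must be justified that gluing a ball to a ball along a boundary disc gives a ball; I would quote this as a standard consequence of the disc theorem (uniqueness of smooth discs up to isotopy).
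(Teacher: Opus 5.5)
Your route is the paper's: Dehn's lemma for genus $g\geq 1$, and for spheres a Morse height function, planes between consecutive critical values, compressions along innermost level circles, and reassembly of balls by union or difference. The gap is at the bottom of your induction, where some spheres are covered neither by your base case nor by your inductive step.

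Once every level circle has been compressed, each resulting sphere is a single piece of $S$ lying in one slab, capped off by pushed-off discs. If the critical point in that slab is a saddle, the piece is a pair of pants. The capped sphere then meets none of the planes but has four critical points: the saddle and three caps. Your inductive step needs a circle, and your base case allows only two critical points, so these spheres fall through. Horizontal compressions can never remove the saddle either: new planes only split off trivial caps and reproduce the same configuration. The paper disposes of these spheres by appeal to its figure, but in your write-up they need an actual argument. For example, look at the side of the saddle where the level sets consist of two circles. The solid tube swept out by the planar discs bounded by one of them (the inner one, if they are nested) meets the sphere only in its wall and its cap. You can slide that cap along the tube toward the saddle and cancel the extremum against the saddle by an isotopy, which reduces to your two-critical-point case. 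Alternatively, follow the horizontal cross-sections of the enclosed region through the saddle level and check directly that it is a ball.

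A smaller point concerns the nested reassembly. If $B_2\subset B_1$, the region bounded by $S$ is $B_1$ minus the ball formed by $B_2$ and the thickened disc $D\times[-\epsilon,\epsilon]$ lying between $S_1$ and $S_2$; this ball meets $\partial B_1$ in a disc. That is a removal, not a gluing, so ``attaching a ball to a ball along a disc gives a ball'' does not apply as stated. It is easily repaired in $S^3$. By the disc theorem the closure of $S^3\setminus B_1$ is a ball, and $S$ bounds its union with the thickened disc and $B_2$, which is a genuine gluing. That is all the theorem needs, and one more application of the disc theorem gives the ball on the other side too.
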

\begin{proof}
Since $S^3$ is simply connected, by Dehn's lemma every surface $S$ of genus $g\geq 1$ is compressible. 
We are left with spheres, and this is in fact the content of Alexander's theorem: every sphere $S \subset S^3$ bounds a 3-dimensional disc $D^3$, and actually it bounds two discs, from both sides, simply because it is isotopic to a standard equatorial 2-sphere. Asking whether every codimension one sphere in $S^n$ is isotopic to a standard one is the famous \emph{Schoenflies problem}: this is the Jordan theorem in dimension $n=2$, and it is still a famous open problem in dimension $n=4$, so we should not expect an obvious proof in dimension $n=3$. However, the proof is not hard if you are sufficiently trained as a 3-manifold topologist.

\begin{figure}
 \begin{center}
\centering
  \includegraphics[width = 6 cm]{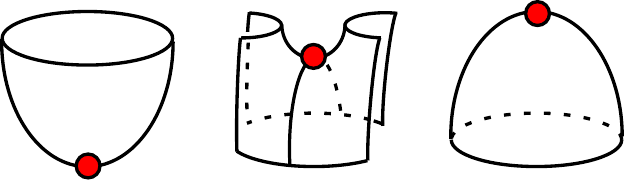}
 \end{center}
 \caption{The height function is a Morse function, with finitely many minima, saddle points, and maxima. }  \label{Morse:fig}
\end{figure}

Consider the surface $S$ in $\matR^3 = S^3 \setminus \{\infty\}$ and modify it with a generic rotation so that the height coordinate $z$ is a Morse function $z \colon S \to \matR$, with finitely many minimal, saddle points, and maxima as in Figure \ref{Morse:fig}, that are at different heights $z_1 < z_2 < \cdots < z_n$. Pick intermediate heights $z_1' < z_1 < z_2' < \cdots < z_n' < z_n < z_{n+1}'$ and cut the surface $S$ along the horizontal planes $P_i=\{z=z_i'\}$. Each plane $P=P_i$ cuts $S$ into a finite set of closed curves. In a neighbourhood of $P$, compress the surface $S$ starting from the innermost closed curves as shown in Figure \ref{Morse_cap:fig} (we use Jordan's theorem here to picture every closed curve as a circle). 

\begin{figure}
 \begin{center}
\centering
  \includegraphics[width = 10 cm]{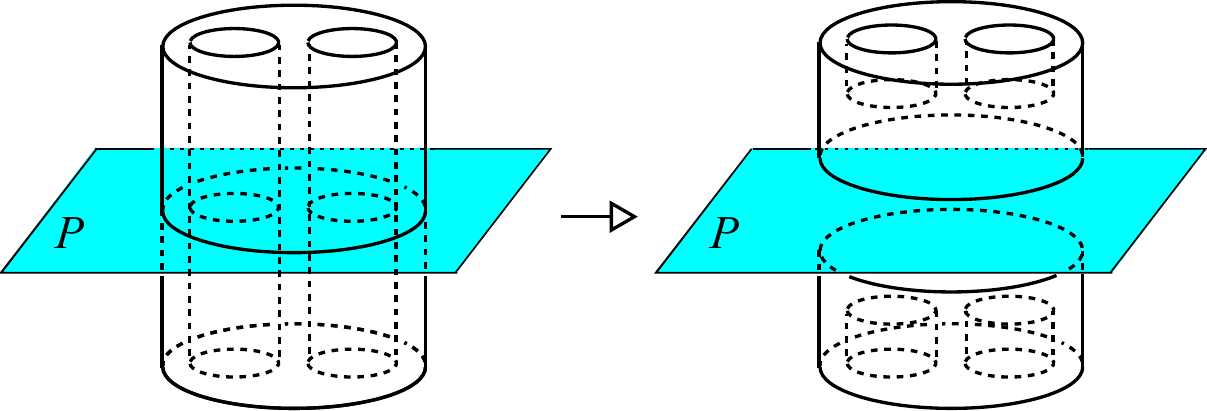}
 \end{center}
 \caption{For every horizontal plane $P$, compress $S$ starting from the innermost circles. }  \label{Morse_cap:fig}
\end{figure}

\begin{figure}
 \begin{center}
\centering
  \includegraphics[width = 9 cm]{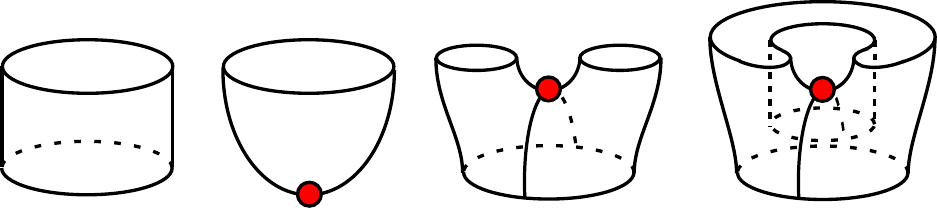}
 \end{center}
 \caption{After the multiple compressions, the resulting surface is a union of spheres as shown here. Each such sphere bounds a three-dimensional disc. }  \label{Morse2:fig}
\end{figure}

The result of these compressions is a new surface $S'$ that consists of many connected components, that are in fact all spheres as in Figure \ref{Morse2:fig}, and each such sphere bounds a three-dimensional disc, as the figure shows. By reversing the process, we deduce that the original sphere $S$ also bounds a disc: at each step, we glue two spheres that bound discs to a single sphere, that also bounds a disc, that is either the union or the difference of the two original discs.
\end{proof}

\subsection{Cutting along surfaces}

\begin{figure}
 \begin{center}
\centering
  \includegraphics[width = 9 cm]{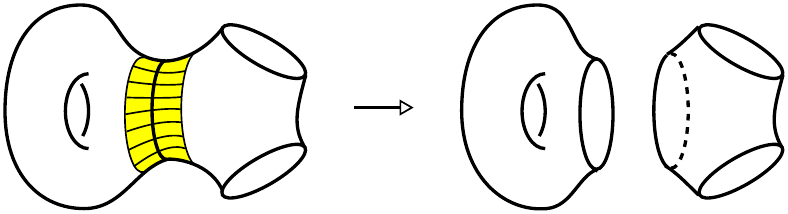}
 \end{center}
 \caption{Cutting a manifold along a codimension 1 submanifold with trivial tubular neighbourhood.}  \label{cut_new:fig}
\end{figure}

If $S \subset M$ is an orientable surface in a 3-manifold $M$, it has trivial tubular neighbourhood $\nu S \isom S \times \matR$, and we may \emph{cut} $M$ along $S$ as shown in Figure \ref{cut_new:fig}. The cut consists of replacing $S \times \matR$ with $S \times (-\infty,0] \sqcup S \times [0, +\infty)$. The result is a new 3-manifold $M'$ with boundary
$$\partial M' = \partial M \sqcup S^+ \sqcup S^-$$ 
where $S^+, S^-$ are both diffeomorphic to $S$. If $S$ is not a torus, $M'$ is not strictly speaking a 3-manifold of the type considered in this paper because it has non-toric boundary components.

By Alexander's theorem, if we cut $S^3$ along any two-sphere we get two discs.

\subsection{Prime decomposition}
There is a well-established notion of connected sum on 3-manifolds that is similar to the one on knots defined above. Let $M_1$ and $M_2$ be two connected oriented 3-manifolds. The \emph{connected sum} $M_1 \# M_2$ of $M_1$ and $M_2$ is defined as follows: 
\begin{enumerate}
\item We pick two 3-dimensional discs $D_1 \subset \interior {M_1}$ and $D_2 \subset \interior {M_2}$ and remove their interiors, thus adding two new boundary spheres $S_1 = \partial D_1$ and $S_2 = \partial D_2$ to the boundaries $\partial M_1$ and $\partial M_2$;
\item We glue these spheres via an orientation-reversing diffeomorphism $S_1 \to S_2$.
\end{enumerate}

Since the diffeomorphism is orientation-reversing, the new manifold $M_1\# M_2$ is oriented; we have $\partial (M_1\#M_2) = \partial M_1 \sqcup \partial M_2$.
By a theorem of Palais \cite{Palais} the discs $D_1, D_2$ are in fact unique up to ambient isotopy, and by a theorem of Smale \cite{Smale} the diffeomorphism $S_1 \to S_2$ is also unique up to isotopy; both facts imply that the resulting manifold $M_1 \# M_2$ is unique up to diffeomorphism. It is worth noting that Palais' theorem holds in every dimension $n$, while Smale's does not! For this reason, to define the connected sum in higher dimension we must choose the gluing diffeomorphism with more care. 

The manifold $M_1 \# M_2$ contains a sphere $S$, that is the result of glueing $S_1$ with $S_2$. We can invert the move   by cutting $M_1 \# M_2$ along $S$, and then capping off the two new resulting sphere boundary components with two discs $D_1$ and $D_2$. This is what we call \emph{cutting a manifold along a sphere}: we first cut along a sphere, thus producing two new spherical boundary components, and then attach two discs. 

We observe that the resulting sphere $S$ is essential, that is it does not bound a disc, if and only if both $M_1$ and $M_2$ are not diffeomorphic to $S^3$. 

Oriented 3-manifolds form a commutative monoid with the connected sum operation, with $S^3$ playing the role of the neutral element. A connected sum $M=M_1 \# M_2$ is \emph{non-trivial}, that is $M_1, M_2 \neq S^3$, if and only if the resulting sphere $S$ is essential. The presence of an essential sphere $S$ in a 3-manifold $M$ that separates $M$ into two components thus certifies that $M$ decomposes non-trivially as some connected sum $M=M_1 \# M_2$.

Like with knots, we say that an oriented 3-manifold different from $S^3$ is \emph{prime} if it cannot be obtained as the connected sum of two 3-manifolds, both different from $S^3$. By what just said this is equivalent to saying that $M$ contains no essential sphere $S$ that decomposes $M$ into two pieces.
After Dehn's lemma and Alexander's theorem, the third important result in three-dimensional topology is the following, where existence was proved by Kneser \cite{Kneser} and uniqueness by Milnor \cite{Mil}.

\begin{teo}[Prime Decomposition Theorem] \label{prime:teo}
Every 3-manifold $M\neq S^3$ decomposes uniquely as the connected sum of finitely many prime 3-manifolds.
\end{teo}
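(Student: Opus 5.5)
Existence and uniqueness are separate problems. For existence I will follow Kneser's approach via normal surfaces and a complexity bound. For uniqueness I will follow Milnor's approach by cutting and pasting spheres, using Alexander's Theorem.

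\emph{Existence.} The naive approach is to split $M$ along an essential separating sphere, and keep splitting whenever a summand is not prime. The issue is termination. A first attempt is to bound things by the rank of $\pi_1$ via Grushko's theorem. That fails, because simply connected summands are invisible to it; they would be fake 3-spheres, and we do not want to invoke Poincaré here. So I will use Kneser's finiteness argument instead. Fix a triangulation $T$ of $M$ with $t$ tetrahedra. Suppose $S_1,\dots,S_k$ are disjoint spheres in $M$ such that no complementary region is a punctured $S^3$, that is, $S^3$ with finitely many open discs removed. Step one is to isotope the system into normal position with respect to $T$. Do this by compressing along discs in the faces and edges. Each compression of a sphere produces two spheres, and one of them may be discarded while preserving the hypothesis; here Alexander's Theorem is used locally inside a tetrahedron, where a sphere bounds a ball. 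Step two is to note the following: once the spheres are normal, at most about $6t$ of the complementary regions inside each tetrahedron can fail to be products (normal disc)$\times I$. Hence if $k$ is larger than some $c(T)$, two spheres $S_i, S_j$ cobound a product region $S^2\times I$. That region is a punctured $S^3$, which is a contradiction. Therefore the number of summands obtained by iterated splitting is bounded, and the process terminates. Nonseparating spheres give $S^2\times S^1$ summands and are counted in the same way.

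\emph{Uniqueness.} Suppose $M = P_1\#\cdots\#P_n = Q_1\#\cdots\#Q_m$, realised by sphere systems $\Sigma$ and $\Sigma'$. Make the two systems transverse, and induct on the number of intersection circles. Take an innermost circle on some sphere $S'\in\Sigma'$. It bounds a disc $D\subset S'$ whose interior misses $\Sigma$. Use $D$ to surger the sphere $S\in\Sigma$ that contains $\partial D$. This produces two spheres, and one of them can replace $S$ in $\Sigma$ without changing the list of prime factors. The reason is that $D$ lies in a punctured $P_i$, and primality of $P_i$ together with Alexander's Theorem shows that one of the new spheres bounds a punctured ball there. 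Eventually $\Sigma\cap\Sigma'=\emptyset$. Then each $S'$ lies in some punctured $P_i$. By primality, $S'$ either bounds a ball or splits off only $S^3$ there, and comparing the pieces gives $\{P_i\}=\{Q_j\}$. The $S^2\times S^1$ summands need separate care. They are counted by the number of nonseparating spheres, and the point to check is that $P\# (S^2\times S^1)$ is independent of the gluing choice for orientable $P$.

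I expect the main obstacle to be the finiteness step in existence. One must show carefully that normalisation does not destroy the "no punctured $S^3$ region" property, and that parallel normal spheres really cobound $S^2\times I$. Uniqueness is comparatively routine cut-and-paste once Alexander's Theorem is available.
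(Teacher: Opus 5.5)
Your existence argument follows the paper (Kneser's normal surfaces). Phrasing it as ``no complementary region is a punctured $S^3$'' is a fine substitute for ``essential and pairwise non-parallel''. The gap is in the counting step. From ``at most $6$ non-prismatic pieces per tetrahedron, hence at most $6t$ regions containing one'' you jump to ``two spheres cobound an $S^2\times I$''.

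A region built entirely from prisms is only an $I$-bundle over a closed surface, and that bundle may be twisted. With sphere boundary it can be the twisted $I$-bundle over $\mathbb{RP}^2$, i.e.\ $\mathbb{RP}^3$ minus a ball. This region is orientable and is not a punctured $S^3$, so your hypothesis does not exclude it, and it can occur when $M$ has $\mathbb{RP}^3$ summands. You need a separate bound on such regions, and this is exactly the extra case the paper handles via $H^1(M;\matZ/2\matZ)$. Each such region splits off an $\mathbb{RP}^3$ summand, so there are at most $\dim H_1(M;\matZ/2\matZ)$ of them.

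Relatedly, nonseparating spheres are not ``counted in the same way''. The $6t$ bound is on regions, and $k$ spheres can have far fewer than $k+1$ complementary regions. You need at least $k+1-\dim H_1(M;\matZ/2\matZ)$ regions, which holds because the dual graph has first Betti number at most $\dim H_1(M;\matZ/2\matZ)$. With both corrections, $c(T)=6t+2\dim H_1(M;\matZ/2\matZ)$ works.

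A minor point: the justification for discarding one surgered sphere is not local to a tetrahedron. The original region would be a punctured $S^3$ if both of its halves were. Alexander's theorem then shows that a connected summand of $S^3$ is $S^3$.

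Uniqueness is not proved in the paper, which attributes it to Milnor. Your cut-and-paste outline is the standard one, but as written its key step fails for $S^2\times S^1$ factors. $S^2\times S^1$ is prime, yet a disc in a punctured $S^2\times S^1$ with boundary on a boundary sphere can cap off to two nonseparating spheres, neither of which bounds a punctured ball. The usual repair runs as follows:
\begin{itemize}
\item Take the regions of $M|\Sigma$ to be punctured copies of the irreducible factors (the primes other than $S^2\times S^1$, in which every sphere bounds a ball), together with punctured $S^3$'s.
\item Realise the $S^2\times S^1$ summands by nonseparating spheres.
\item Replace $S$ by \emph{both} surgered spheres; this preserves that form of $M|\Sigma$ whatever the separation pattern.
\item Once the irreducible factors are matched, recover the number of $S^2\times S^1$ summands from $\mathrm{rank}\, H_1(M)$.
\end{itemize}
What must be proved is that this number is an invariant. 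The well-definedness of $P\#(S^2\times S^1)$, which you single out, is not the issue.
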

\begin{proof}[Proof of existence]
Let $M$ be a 3-manifold. To prove the $M$ decomposes as the connected sum of finitely many prime manifolds, the strategy is clear: if it is not prime, we have $M= M_1 \# M_2$ with $M_1,M_2 \neq S^3$, then we look at $M_1$ and $M_2$, and iterate. But how can we be sure that this process ends? 

We have remarked that $M= M_1 \# M_2$ with $M_1,M_2 \neq S^3$ precisely when $M$ contains an essential sphere $S\subset M$ that separates $M$ into two components. So a way to ensure that the process ends is to show that $M$ cannot contain arbitrarily many essential disjoint and pairwise non-parallel spheres (two spheres $S, S'$ are parallel if they cobound a piece diffeomorphic to $S^2 \times [0,1]$). Kneser proved this by introducing a formidable tool called \emph{normal surface theory}, that we briefly describe.

\begin{figure}
\begin{center}
\includegraphics[width = 6 cm] {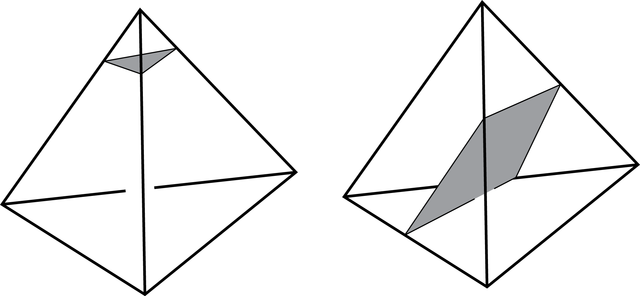}
\caption{A normal surface intersects every tetrahedron in triangles or squares.}
\label{normal_surface:fig}
\end{center}
\end{figure}

Pick a triangulation for $M$. A surface $S \subset M$ is \emph{normal} if it is transverse to the edges and faces of the triangulation, and it intersects every tetrahedron into some triangles and squares as in Figure \ref{normal_surface:fig}.
We prove that every surface $S \subset M$ without boundary becomes a normal surface after an isotopy and some \emph{elementary transformations}, that consist of (i) the removal of a component fully contained in a 3-disc and (ii) compressions along some discs as in Figure \ref{incompressible:fig}. 

The proof goes as follows. Put $S$ in transverse position with respect to the triangulation, and minimize its intersections with its edges and faces. After compressions as in Figure \ref{normal2:fig} we can suppose that the intersection of $S$ with each tetrahedron $\Delta$ consists of discs and components that are entirely contained in the interior of $\Delta$, that can be removed. So we get only discs. Consider one such disc $D$. If $\partial D$ is entirely contained in a triangular face of $\Delta$ as in Figure \ref{normal:fig} we remove it via a compression. If it intersects an edge at least twice, we remove it via an isotopy as in Figure \ref{normal3:fig}. We finally end up with discs $D$ whose boundary $\partial D$ intersect each edge at most one: these are triangles and squares, as asserted.

\begin{figure}
\begin{center}
\includegraphics[width = 12.5 cm] {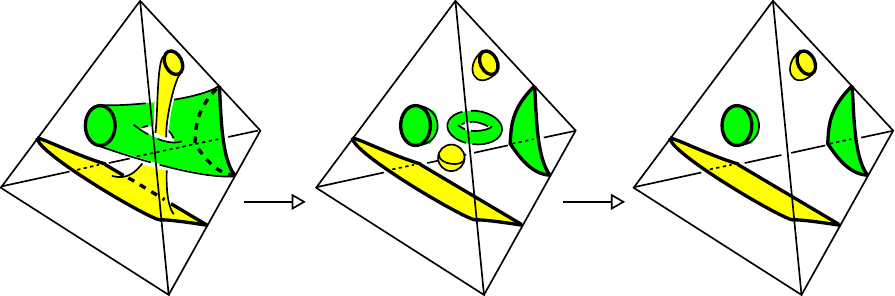}
\caption{After some compressions we get only discs and surfaces without boundary entirely contained in $\Delta$, that we remove.}
\label{normal2:fig}
\end{center}
\end{figure}

\begin{figure}
\begin{center}
\includegraphics[width = 6 cm] {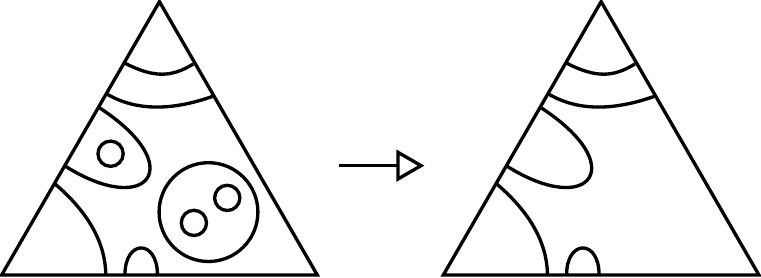}
\caption{We remove the circular intersections of $S$ with a triangle of the triangulation by compressions, starting with the innermost circles.}
\label{normal:fig}
\end{center}
\end{figure}

\begin{figure}
\begin{center}
\includegraphics[width = 8 cm] {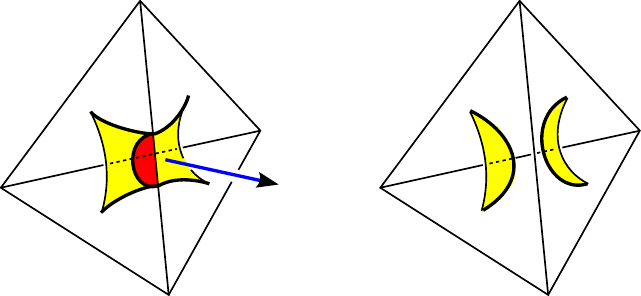}
\caption{If $\partial D$ intersects an edge twice, we remove both intersections with an isotopy (here again we start with innermost intersections).}
\label{normal3:fig}
\end{center}
\end{figure}

\begin{figure}
\begin{center}
\includegraphics[width = 9 cm] {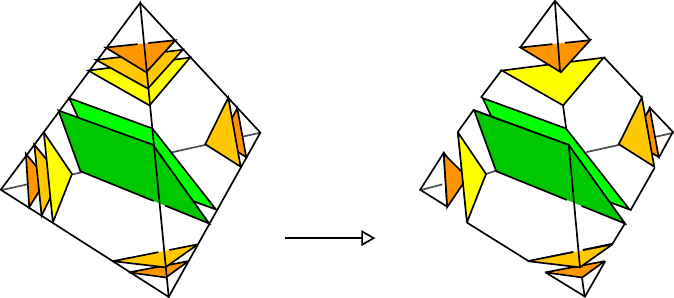}
\caption{By cutting $M$ along a normal surface $S$ we get a manifold that is obtained as the union of many prisms, and at most 6 other pieces for each tetrahedron (here we have 4 pyramids and two esahedra).}
\label{normal4:fig}
\end{center}
\end{figure}

If $S$ is a set of spheres, the resulting normal surface $S'$ is again a set of spheres. One proves that if the spheres in $S$ 
are essential and pairwise non-parallel, then $S'$ also contains a set of essential and pairwise non-parallel spheres with at least the same cardinality of $S$ (essentiality cannot be lost in the process). We keep these spheres in $S'$ and remove all the others.

We conclude by noting that a normal surface $S'$ may have at most $k$ pairwise non-parallel components, where $k$ is some number that depends only on the triangulation $\Delta$. This is shown essentially in Figure \ref{normal4:fig}: by cutting $M$ along $S'$ we get a manifold $M'$ that is made of many prisms, but only $6t$ non-prismatic polyhedra, where $t$ is the number of tetrahedra in $\Delta$. Therefore only at most $6t$ components of $M'$ are not made entirely of prisms. A manifold that is made entirely of prisms is either a product (excluded because the components of $S'$ are pairwise non-parallel), or a twisted bundle over a non-orientable surface, but there can be only at most $|H^1(M, \matZ/2\matZ)|$ of them since they all yield distinct non-trivial cohomology classes.

The proof of the existence of a prime decomposition is complete.
\end{proof}

\subsection{JSJ decomposition}
Having cut canonically every 3-manifold along spheres, we now turn to tori. This procedure is quite different from the previous one, and is usually called the \emph{JSJ decomposition}, following the names of 
Jaco, Shalen, and Johannson who discovered it independently \cite{JS, Joh} in 1979, precisely 50 years after Kneser's contribution. A different approach to the theory is given by Neumann and Swarup \cite{NS}.

Let $M$ be a prime 3-manifold. We say that an essential torus $T \subset M$ is \emph{canonical} if every other torus $T' \subset M$ can be isotoped to be disjoint from $T$. The following example shows that this is a serious requirement.

\begin{figure}
 \begin{center}
 \labellist
\small\hair 2pt
\pinlabel $\eta$ at 200 20
\pinlabel $\gamma$ at 100 50
\pinlabel $S$ at 220 5
\endlabellist
 \centering
  \includegraphics[width = 6 cm]{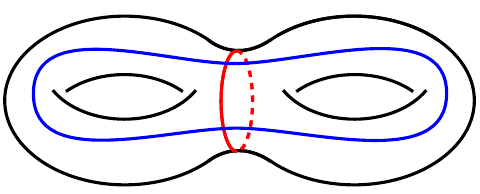}
 \end{center}
 \caption{Every essential curve $\gamma$ in a surface $S$ intersects some other essential curve $\eta$ in an essential way (there is no way to isotope $\eta$ away from $\gamma$).}  \label{surface:fig}
\end{figure}

\begin{example} \label{SxS1:example}
Let $S$ be a surface of genus $g \geq 1$ as in Figure \ref{surface:fig}.
An \emph{essential curve} in $S$ is a simple closed curve $\gamma \subset S$ that does not bound a disc. If $\gamma \subset S$ is an essential curve as in the figure, then $T=\gamma \times S^1 \subset S \times S^1$ is an essential torus in the 3-manifold $M=S \times S^1$. However, this essential torus is not canonical: it is always possible to find another essential curve $\eta \subset S$ that intersects $\gamma$ in an essential way, so that there is no way to isotope $T' = \eta \times S^1$ away from $T$. See Figure \ref{surface:fig}.
\end{example}

\begin{teo}[JSJ decomposition]
Let $M$ be a prime 3-manifold. Let $T = T_1 \sqcup \cdots \sqcup T_k \subset M$ be a maximal collection of disjoint and pairwise non-parallel canonical tori. This collection exists and is unique up to isotopy.
\end{teo}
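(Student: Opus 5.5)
Existence and uniqueness are handled separately, and the definition of canonical tori makes uniqueness almost formal. The real content is existence, i.e.\ that the collection is finite, and I would model that on the existence part of the Prime Decomposition Theorem \ref{prime:teo}.

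\emph{Existence.} Any family of disjoint, pairwise non-parallel essential tori in $M$ has bounded cardinality, so a maximal collection of canonical tori can be reached by adding tori one at a time. To get the bound I would rerun Kneser's normal surface argument with tori in place of spheres. Fix a triangulation of $M$ with $t$ tetrahedra and let $S$ be a disjoint union of essential, pairwise non-parallel tori. Put $S$ in transverse position and apply the elementary transformations from the proof of Theorem \ref{prime:teo}. Each compression of an incompressible torus is along a disc whose boundary bounds a disc $D'$ in the torus. So the compression produces a torus isotopic to the original one plus a sphere. Since $M$ is prime, that sphere bounds a 3-disc and is discarded. The result is a normal surface isotopic to $S$, with no loss of components. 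Cutting $M$ along it gives at most $6t$ non-prismatic pieces. The prismatic pieces are products, which are excluded by non-parallelism, or twisted $I$-bundles, which are bounded by $|H^1(M,\matZ/2\matZ)|$. This yields a bound $k(M)$ depending only on the triangulation.

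\emph{Uniqueness.} Let $T$ and $T'$ be two maximal collections of disjoint, pairwise non-parallel canonical tori. Since each $T'_j$ is canonical, each $T_i$ can be isotoped off $T'_j$, and symmetrically. The main obstacle is to make \emph{all} the tori of $T\cup T'$ pairwise disjoint at once, not just one pair at a time. Here I would choose a Riemannian metric on $M$ and replace each torus by a least-area representative in its isotopy class. Least-area incompressible surfaces that can be isotoped apart are in fact disjoint or coincide (Freedman--Hass--Scott), so after the isotopy all the tori are simultaneously disjoint. Alternatively, one could try minimising the total number of intersection curves and removing inessential curves by innermost-disc arguments, using incompressibility and primeness, and essential curves using canonicity.

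Once everything is disjoint, each $T'_j$ is disjoint from $T$ and non-parallel to any other $T'_{j'}$. If $T'_j$ were parallel to no $T_i$, then $T\cup\{T'_j\}$ would be a larger disjoint, pairwise non-parallel family of canonical tori, contradicting maximality of $T$. Hence each $T'_j$ is parallel to some $T_i$, and by symmetry each $T_i$ is parallel to some $T'_j$. Distinct $T'_j$ cannot be parallel to the same $T_i$, since they would then be parallel to each other. So parallelism gives a bijection, and $T$ and $T'$ are isotopic.
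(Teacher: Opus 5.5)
Your existence argument (Kneser--Haken finiteness via normal surfaces, where compressions of incompressible tori are trivial and so amount to isotopies) matches the paper's sketch, with more detail. So does the maximality and bijection part of uniqueness.

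The genuine difference is the simultaneous-disjointness step. The paper stops there, admitting it has ``cheated'' and that the actual proof is harder. You close the gap with a different key ingredient: put all tori in least-area position for a fixed metric and invoke Freedman--Hass--Scott, which says that least-area incompressible surfaces that can be homotoped apart are already disjoint or coincide. This turns the pairwise disjointness that canonicity provides into simultaneous disjointness for free. The cost is an analytic black box, whereas the paper's sketch is elementary but incomplete exactly at this point.

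The black box needs care in three places:
\begin{enumerate}
\item Freedman--Hass--Scott minimise in the homotopy class, and when a torus bounds a twisted $I$-bundle over the Klein bottle the minimiser may double cover a one-sided least-area Klein bottle. This case does occur for canonical tori; compare the line bundle over the Klein bottle mentioned after the theorem. Then there is no embedded least-area torus isotopic to the given one, and you must use the boundary of a thin neighbourhood of that Klein bottle instead, as they do.
\item To know that the embedded minimisers are isotopic, not just homotopic, to the original tori, you need Waldhausen's theorem for incompressible surfaces in irreducible manifolds. If $\partial M\neq\emptyset$ you also need a metric with convex boundary.
\item Your combinatorial fallback does not work as stated, because canonicity is only a pairwise statement. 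Removing essential intersection curves needs an extra lemma: two incompressible tori that meet in essential curves and can be isotoped apart cobound a product region between annuli, and pushing across an innermost such region lowers the total intersection.
\end{enumerate}

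A minor point: your compression step uses irreducibility rather than primeness. This is harmless, since the only prime reducible manifold, $S^2\times S^1$, contains no essential tori.
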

\begin{proof}[Na\"\i ve sketch of the proof]
The existence of a maximal collection follows from normal surface theory: similarly as in the proof of Theorem \ref{prime:teo}, we can show that there is an upper bound on the number of essential pairwise non-parallel tori. Uniqueness seems easy to achieve: if you have another collection $T' = T_1', \ldots, T_{k'}$, then by assumption every $T_j'$ can be isotoped away from every $T_i$, and hence the two collections $T$ and $T'$ can be isotoped to be disjoint, and by maximality of $T$ every $T_j'$ is actually parallel to some $T_i$, so $T=T'$ after an isotopy.

However we have cheated here, since it is not obvious that one can isotope $T_j'$ away from all $T_1,\ldots, T_k$ \emph{simultaneously}. So the actual proof is harder than that.
\end{proof}

This maximal collection $T$ of canonical tori is called the \emph{JSJ decomposition} of $M$. Each torus $T_i$ has a trivial tubular neighbourhood and we can cut $M$ along the tori $T_1,\ldots, T_k$, producing a possibly disconnected manifold $M'$ such that 
$$\partial M' = \partial M \sqcup T_1^0 \sqcup T_1^1 \sqcup \cdots \sqcup T_k^0 \sqcup T_k^1,$$
where $T_i^0, T_i^1$ are two copies of $T_i$. Therefore $M'$ has the same boundary as $M$, plus $2k$ more additional tori.
The connected components $M_1,\ldots, M_k$ of $M'$ are the \emph{pieces} of the JSJ decomposition. Of course the JSJ decomposition may be empty: this occurs when $M$ contains no canonical tori at all, and in that case $M'=M$. One example is $S^3$, that does not contain essential tori at all, and another is the product manifold $S \times S^1$ from Example \ref{SxS1:example}, which contains many incompressible tori, but none of which is canonical. 

In the very specific case where the collection $T$ consists of only one torus, and the resulting pieces consist of either one or two line bundles, we actually remove the torus from the JSJ decomposition and leave it empty. 
Note that only two line bundles can occur: the trivial line bundle over the torus, and the non-trivial orientable line bundle over the Klein bottle. This very specific case may arise only if $M$ is a torus bundle over $S^1$, or is doubly covered by a torus bundle, and it is more convenient to impose an empty JSJ decomposition for such manifolds.

When we previously cut along spheres, we capped off the boundary components by attaching discs. Here we cut along tori, and there is no canonical way to attach any manifold to these new boundary tori $T_i^j$, so we just keep them as they are. The 3-manifold $M' = M_1\sqcup \cdots \sqcup M_k$ is obtained from the prime 3-manifold $M$ by \emph{cutting it along tori}. This operation is canonical. 

The main reason for allowing $M$ to have boundary tori arises from here: even if we consider only 3-manifolds $M$ with empty boundary, they may have a non-trivial JSJ decomposition into manifolds bounded by tori at the end, so allowing boundary tori from the very beginning comes at no additional cost.

\subsection{Summary of the decompositions}
We have explained in detail how every 3-manifold $M$ decomposes canonically, first via the prime decomposition, and then via the JSJ decomposition, into some pieces $M_1,\ldots, M_k$. Every piece $M_i$ is itself a 3-manifold. Even if $M$ has empty boundary, some of the resulting pieces may have boundary, that consists of tori. We can reverse the process, and reconstruct the original 3-manifold $M$ from these pieces by first gluing some of them along their toric boundary components, and finally by doing some connected sums. There are multiple ways to do this, so $M$ is not determined by the list of manifolds $M_1,\ldots, M_k$ alone, but also by the way these are assembled.

\subsection{Geometric pieces}
To understand the geometrisation theorem, we are left with the definition of a \emph{geometric piece}. The definition is very simple to state. 

\begin{defn} \label{geometric:defn}
A 3-manifold $M$ is \emph{geometric} if its interior admits a complete locally homogeneous Riemannian metric.
\end{defn}

Here \emph{locally homogeneous} means that every two points $x,y$ have isometric neighbourhoods. Thurston showed \cite{Thu} that only 8 possible geometries may arise:
$$\matH^3, \ \matR^3, \ S^3, \ S^2 \times \matR, \ \matH^2 \times \matR,\ \widetilde{\SL_2},\ \Nil,\ \Sol.$$

Each of these symbols indicates a homogeneous complete Riemannian simply connected 3-manifold. The term \emph{homogeneous} means as usual that the isometry group of the manifold acts transitively on points. The spaces $\matH^k, \matR^k, S^k$ are the standard simply connected complete spaces with constant sectional curvature $-1,0,1$. The constant curvatures in the remaining 5 spaces are not constant.

The last three spaces $\widetilde{\SL_2}, \Nil, \Sol$ are Lie groups $G$, and by assigning an arbitrary metric to $\mathfrak g = T_e G$ and extending it by left multiplication we get a homogeneous metric on $G$. The group $\widetilde{\SL_2}$ is the universal cover of $\SL_2(\matR)$. The group $\Nil$ is the Heisenberg group, that consists of all matrices of type
$$\begin{pmatrix} 1 & x & z \\ 0 & 1 & y \\ 0 & 0 & 1 \end{pmatrix}.$$
This is a nilpotent group, hence the name $\Nil$. The group $\Sol$ is again $\matR^3$ equipped with the operation
$$(x,y,z) \cdot (x',y',z') = (x+e^{-z}x', y + e^zy', z+z').$$
This group is solvable and not nilpotent, whence the name $\Sol$. 

Thurston proved \cite{Thu} that the locally homogeneous Riemannian metrics that may arise in Definition \ref{geometric:defn} are locally isometric to one of the 8 Riemannian manifolds just listed. Since the metrics are complete, for each geometric manifold $M$ we get 
$$\interior M = \matX^3 / \Gamma$$
where $\matX^3$ is one of the 8 Riemannian models listed and $\Gamma< \Isom(\matX^3)$ is a discrete subgroup acting freely on $\matX^3$. When $M$ has boundary, its interior is not compact and might have infinite volume; it turns out that except in very few simple cases (solid tori and line bundles) the volume of $M$ is always finite. This is notably true for hyperbolic 3-manifolds: at each topological end $T \times [0, +\infty)$ the metric is $e^{-2t} g^T \oplus 1$ at a point $(x,t)$, that is we have a flat torus $T$ with some flat metric $g^T$ that is shrunk exponentially fast in the $t$ direction. Such a portion is called a \emph{cusp} and it has finite volume, equal to half the area of $T$.

If $M$ has no boundary, it may admit at most one of the 8 geometries. If $M$ has boundary, different geometries may coexist. If $M$ is hyperbolic with finite volume, by Mostow and Prasad's rigidity theorem \cite{Mostow, Mostow2, Prasad} it admits only one hyperbolic metric up to isometry.
If $M$ is flat, like a 3-torus, it may admit uncountably many non-isometric flat metrics. 


A geometric 3-manifold $M$ may contain some essential torus only if the geometry is 
$\matR^3, \matH^2 \times \matR, \widetilde{\SL_2}, \Nil$, or $\Sol$. The essential torus is not canonical, since the JSJ decomposition for a geometric manifold is always empty. 

\subsection{Geometrisation}
We can finally state the geometrisation theorem.

\begin{teo} [Geometrisation]
Let $M$ be a prime 3-manifold. Every piece arising from the JSJ decomposition of $M$ is geometric.
\end{teo}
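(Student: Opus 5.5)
The plan follows Perelman's approach through Ricci flow with surgery, combined with the classical topology set up above. Let $M$ be prime and let $M_1,\ldots,M_k$ be the JSJ pieces. Each piece is irreducible, because an essential sphere in $M_i$ would give one in $M$. Each piece also has incompressible toric boundary, since the canonical tori are essential. I would split the pieces into two classes according to whether they contain an essential torus.

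\textbf{Pieces with an essential torus.} Here the essential torus is necessarily non-canonical. The first step is to show that such a piece is Seifert fibred; this is the classical characterisation of Jaco--Shalen and Johannson, namely that non-canonical tori arise only inside Seifert fibred pieces, as in Example \ref{SxS1:example}. Some pieces instead fall into the special torus-bundle case, where the JSJ collection was declared empty. The second step is to geometrise these directly.
\begin{enumerate}
\item A Seifert fibred manifold carries one of the six geometries $S^3$, $\matR^3$, $S^2\times\matR$, $\matH^2\times\matR$, $\Nil$, $\widetilde{\SL_2}$. The choice is determined by the sign of the orbifold Euler characteristic of the base and by whether the Euler number of the fibration vanishes.
\item A metric is obtained by lifting an orbifold metric on the base and choosing a suitable connection form.
\item Torus bundles are handled by their monodromy in $\SL_2(\matZ)$: elliptic monodromy gives $\matR^3$, parabolic gives $\Nil$, and Anosov gives $\Sol$.
\end{enumerate}
This part is explicit, if tedious.

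\textbf{Atoroidal pieces.} This is the heart of the matter, where one must show that a piece $N$ with no essential tori is either Seifert fibred or hyperbolic.
\begin{enumerate}
\item If $\partial N\neq\emptyset$, $N$ is Haken, and Thurston's hyperbolisation for Haken manifolds applies. Its proof goes by induction along a hierarchy, with gluing achieved through the skinning-map fixed point argument.
\item If $N$ is closed and non-Haken, this is where Perelman's work enters. I would run Ricci flow with surgery on an arbitrary metric on $N$.
\item Surgery occurs only in $\varepsilon$-necks and caps, whose topology is that of spheres or spherical space forms. Since $N$ is irreducible, surgeries only discard $S^3$ components or spherical pieces.
\item As $t\to\infty$, the rescaled metric $g(t)/t$ decomposes $N$ into a thick part and a thin part. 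The thick part converges to complete finite-volume hyperbolic pieces bounded by incompressible cusps. The thin part collapses with curvature locally bounded below, and is therefore a graph manifold by the Shioya--Yamaguchi and Morgan--Tian collapsing theory.
\item Atoroidality then forces one of two outcomes: either the thick part is all of $N$, so $N$ is hyperbolic, or the thick part is empty, so $N$ is a graph manifold without canonical tori and hence Seifert fibred.
\item In the finite-time extinction case, $N$ is spherical. This includes the Poincaré conjecture, via Colding--Minicozzi or Perelman's width argument.
\end{enumerate}

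\textbf{Main obstacle.} The main obstacle is clearly the analytic core. One must control the singularities through Perelman's $\kappa$-noncollapsing and canonical neighbourhood theorem so that surgery is possible and finitely many surgeries occur in finite time. One must also establish the long-time thick–thin decomposition, including the incompressibility of the cusp tori. For the proposal I would treat these as black boxes, as the paper itself does. The remaining work is the topological bookkeeping that matches the output of the flow with the JSJ pieces.
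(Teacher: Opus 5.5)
Your outline makes the same split the paper gives: Seifert and torus-bundle pieces by hand, Haken and bounded pieces by Thurston (where fibred manifolds actually need the double limit theorem rather than the skinning map), and the remaining closed case by Perelman's Ricci flow; the paper proves nothing itself and simply cites these, so with the same black boxes your argument is correct. There are two bookkeeping slips: $S^2\times S^1$ is prime but not irreducible, so your irreducibility claim and the step ``extinction $\Rightarrow$ spherical'' need it listed as a separate (trivially geometric) exception; and closed Haken atoroidal pieces fall between your items 1 and 2, though they are covered both by Thurston's theorem and by the Ricci flow argument, which never uses non-Hakenness.
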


If the JSJ decomposition is empty, the manifold $M$ is itself geometric: this holds for instance if $M$ contains no essential tori. The theorem was conjectured by Thurston \cite{Thu} and proved by him in many cases, which include all prime 3-manifolds $M$ that contain at least one incompressible surface, and all prime 3-manifolds $M$ with boundary.
Perelmann then proved the theorem \cite{Per1, Per3, Per2} in full generality via completely different analytic methods. 
Geometrisation implies the famous Poincar\'e conjecture: 

\begin{cor}[Poincar\'e Conjecture]
Every simply connected 3-manifold $M$ without boundary is diffeomorphic to $S^3$.
\end{cor}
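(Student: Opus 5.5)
We derive the Poincaré conjecture from the Geometrisation Theorem by running $M$ through the canonical decompositions and showing that every step is trivial when $\pi_1(M)=1$. Let $M$ be simply connected with $\partial M=\emptyset$, and suppose $M\neq S^3$.

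\textbf{Step 1: $M$ is prime.} By the Prime Decomposition Theorem \ref{prime:teo}, $M=M_1\#\cdots\#M_n$ with each $M_i$ prime. By van Kampen, $\pi_1(M)=\pi_1(M_1)*\cdots*\pi_1(M_n)$, since gluing along a simply connected sphere gives free products. A free product is trivial only if every factor is, so each $M_i$ is a closed simply connected prime manifold. It therefore suffices to show that a closed simply connected prime $M$ is $S^3$.

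\textbf{Step 2: the JSJ decomposition of $M$ is empty.} Since $\pi_1(M)=1$, no map $\pi_1(T)=\matZ^2\to\pi_1(M)$ is injective. By Dehn's Lemma, every torus in $M$ is therefore compressible, so $M$ has no essential tori and hence no canonical ones. By the Geometrisation Theorem, $M$ is itself geometric: $M=\matX^3/\Gamma$ with $\Gamma$ acting freely and discretely. Because $M$ is simply connected, $\Gamma$ is trivial, so $M\isom\matX^3$. As $M$ is compact, $\matX^3$ must be compact.

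\textbf{Step 3: identify the model.} Among the eight geometries, only $S^3$ is compact. The others are diffeomorphic to $\matR^3$ or to $S^2\times\matR$, hence non-compact. So $M$ is isometric to the round $S^3$, and in particular diffeomorphic to it.

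\textbf{Main obstacle.} The only delicate point is the definition of prime, which excludes $S^3$ and rests on essential spheres. If some factor $M_i$ were prime but not irreducible, the argument would need care. The only such case is $S^2\times S^1$, whose fundamental group is $\matZ$, so it is excluded here. With that settled, the argument is essentially formal: all the depth lies in the Geometrisation Theorem and in Thurston's classification of the eight geometries.
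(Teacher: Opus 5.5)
Your proposal is correct and follows the paper's proof: reduce to the prime case, use Dehn's lemma to rule out essential tori so the JSJ decomposition is empty, apply Geometrisation to get $M=\matX^3/\Gamma$, and conclude from simple connectivity and compactness that $\Gamma$ is trivial and $\matX^3=S^3$. Your van Kampen argument makes explicit the reduction to prime factors that the paper leaves implicit; your closing remark about $S^2\times S^1$ is unnecessary, since the Geometrisation Theorem as stated already covers all prime manifolds.
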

\begin{proof}
Recall that $M$ is compact by assumption. We can restrict to the case where $M$ is prime. Since $\pi_1(M)$ does not contain $\matZ \times \matZ$, by Dehn's lemma $M$ contains no essential torus, and hence the JSJ decomposition of $M$ is empty. Therefore $M = \matX^3/\Gamma$ is geometric, and since $M$ is simply connected and compact the only possibility is $\matX^3 = S^3$ and $\Gamma$ trivial.
\end{proof}

As promised in the introduction we also state a more focused version, that says that $M$ is hyperbolic unless there are no obvious obstructions; to get a cleaner statement we consider only 3-manifolds without boundary.

\begin{teo}[Hyperbolisation]
Every prime 3-manifold $M$ without boundary such that $\pi_1(M)$ is not finite and does not contain $\matZ \times \matZ$ is hyperbolic.
\end{teo}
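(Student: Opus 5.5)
The plan is to derive the Hyperbolisation theorem from the Geometrisation theorem, following the pattern of the proof of the Poincar\'e Conjecture above. Let $M$ be prime and without boundary, with $\pi_1(M)$ infinite and containing no $\matZ\times\matZ$. First I will show that the JSJ decomposition of $M$ is empty. Any canonical torus $T$ is in particular essential, hence incompressible. By Dehn's Lemma the map $\pi_1(T)=\matZ\times\matZ\to\pi_1(M)$ is then injective, which contradicts the hypothesis. So no canonical tori exist. The special torus-bundle case, where the JSJ collection is declared empty, also needs no separate treatment: it would again require an incompressible torus. Therefore Geometrisation applies to $M$ itself, and $M=\matX^3/\Gamma$ for one of the eight model geometries, with $\Gamma\cong\pi_1(M)$ acting freely, discretely and cocompactly, since $M$ is closed.

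The remaining work is to exclude the seven non-hyperbolic geometries using only the two hypotheses on $\pi_1(M)$. I will go through them one at a time.

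\begin{enumerate}
\item For $S^3$, compactness of $S^3$ forces $\Gamma$ to be finite, which contradicts the assumption that $\pi_1(M)$ is infinite.
\item For $S^2\times\matR$, the closed quotients have $\pi_1$ virtually $\matZ$. Such a manifold is either $S^2\times S^1$ or the orientable quotient $\matR P^3\#\matR P^3$. The first has infinite cyclic $\pi_1$; the second is not prime. For the first, I will instead check whether primeness allows it; if it does, I will need to exclude $S^2\times S^1$ separately. I expect it must be excluded, since $\pi_1=\matZ$ is infinite and contains no $\matZ^2$. I suspect the intended statement tacitly considers irreducible manifolds, or that $\pi_1$ should not be virtually cyclic, and I will flag this.
\item For $\matR^3$, Bieberbach's theorem says $\Gamma$ contains a lattice $\matZ^3$, and so contains $\matZ\times\matZ$.
\item For $\Nil$, cocompact lattices in the Heisenberg group contain $\matZ^2$, for example the subgroup generated by a central element and any other element.
\item For $\Sol$, cocompact lattices are virtually $\matZ^2\rtimes\matZ$, so they contain $\matZ^2$.
\item For $\matH^2\times\matR$ and $\widetilde{\SL_2}$, closed quotients are Seifert fibred. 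Up to finite index, the regular fibre generates an infinite cyclic normal subgroup, and in fact a central one. Taking any infinite-order element in the base orbifold group, together with the fibre, gives a $\matZ\times\matZ$ subgroup.
\end{enumerate}

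Having excluded all seven, only $\matH^3$ remains, so $M$ is hyperbolic.

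The main obstacle is this case analysis, specifically the Seifert fibred geometries. I need the standard fact that closed $\matH^2\times\matR$ and $\widetilde{\SL_2}$ manifolds are Seifert fibred, with an infinite cyclic fibre subgroup that is central up to finite index. I would cite Scott's survey for this rather than reprove it. I also need to track the $S^2\times S^1$ exception noted in the second case.
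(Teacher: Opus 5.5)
Your argument is the intended derivation and follows the same template as the paper's proof of the Poincar\'e Conjecture: Dehn's lemma empties the JSJ decomposition, Geometrisation makes $M$ geometric, and the hypotheses on $\pi_1(M)$ exclude the seven non-hyperbolic geometries (the paper states this theorem without a separate proof). You can settle your hedge in the $S^2\times\matR$ case: $S^2\times S^1$ is prime in the paper's sense, since its essential sphere $S^2\times\{\mathrm{pt}\}$ is non-separating and every separating sphere in it bounds a ball; it is also closed with $\pi_1\isom\matZ$. So the theorem as printed is false and needs $M$ irreducible (equivalently $M\neq S^2\times S^1$), and with that hypothesis your case analysis is complete.
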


\section{The geometric 3-manifolds}
Having settled all the decomposition theory, it is now due time to describe many examples of geometric 3-manifolds. We start with the constant curvature ones. Recall that every 3-manifold $M$ here is tacitly assumed to be connected, compact, orientable, and with (possibly empty) boundary made of tori.

\subsection{$\matR^3$} A geometric 3-manifold locally isometric to $\matR^3$ is called \emph{flat}, because its sectional curvature vanishes everywhere.
Hantsche and Wendt proved \cite{HW} in 1935 that there are precisely 6 flat 3-manifolds without boundary up to diffeomorphism. These can be obtained by pairing isometrically the faces of some Euclidean polyhedra as shown in Figure \ref{flat:fig}. The figure shows that the first five manifolds are torus bundles over the circle with monodromy 
$$\begin{pmatrix} 1 & 0 \\ 0 & 1 \end{pmatrix}, \ 
\begin{pmatrix} -1 & 0 \\ 0 & -1 \end{pmatrix}, \ 
\begin{pmatrix} 0 & -1 \\ 1 & 0 \end{pmatrix}, \ 
\begin{pmatrix} 1 & -1 \\ 1 & 0 \end{pmatrix}, \ 
\begin{pmatrix} -1 & 1 \\ -1 & 0 \end{pmatrix}
$$
of order $1, 2, 4, 6$, and $3$ respectively. Each such matrix identifies an automorphism of the torus $T = \matR^2 /\matZ^2$, and the torus bundle is constructed by glueing the two boundary components of $T \times [0,1]$ via this automorphism.

The sixth manifold is called the \emph{Hantzsche-Wendt manifold}, and it does not fibre over $S^1$ because its first homology group is finite. This fascinating flat 3-manifold has been considered recently by some cosmologists \cite{cosmo}, and may be constructed elegantly by taking a rhombic dodecahedron as in Figure \ref{rhombic:fig} and pairing its opposite faces with a $\pi$ turn.

\begin{figure}
 \begin{center}
 \centering
  \includegraphics[width = 8 cm]{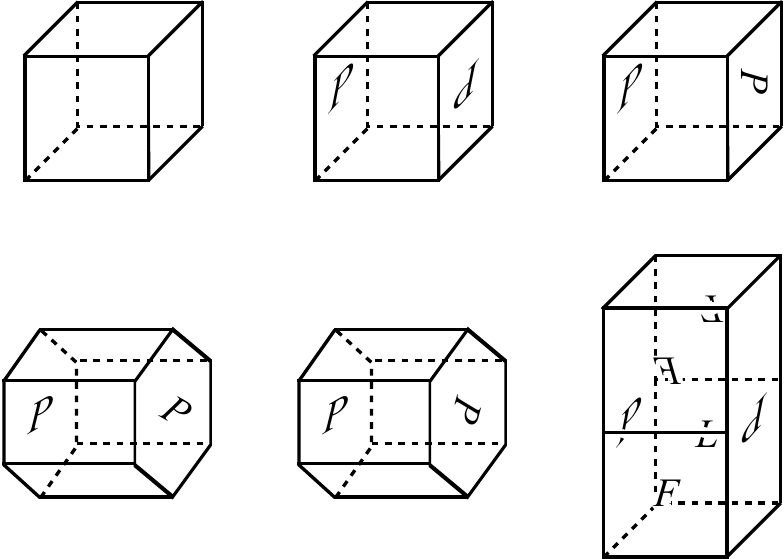}
 \end{center}
 \caption{The six flat orientable 3-manifolds are obtained from these Euclidean polyhedra by pairing their faces isometrically so that similar letters match. Each unlabeled face is paired with the opposite one with a translation: in the top-left cube every face is paired with the opposite one via a translation and we get the 3-torus; in the subsequent four polyhedra all the opposite faces are paired via a translation except the one marked with a $P$; the last one is more complicated.}  \label{flat:fig}
\end{figure}

\begin{figure}
 \begin{center}
 \centering
  \includegraphics[width = 3 cm]{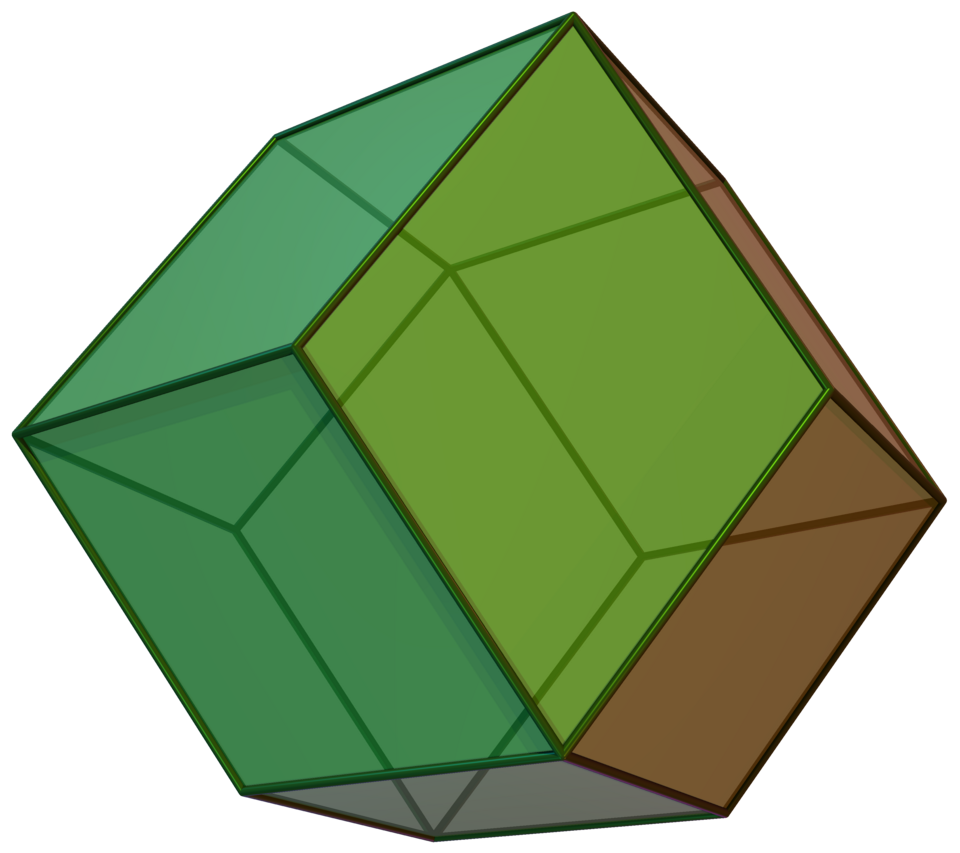}
 \end{center}
 \caption{By pairing the opposite faces of a rhombic dodecahedron with a $\pi$ rotation we get the Hantsche -- Wendt flat 3-manifold.}  \label{rhombic:fig}
\end{figure}

\subsection{$\matH^3$}
A geometric 3-manifold locally isometric to $\matH^3$ is called \emph{hyperbolic}.
The first hyperbolic 3-manifolds constructed in history were built out of regular hyperbolic polyhedra. When considered in Euclidean space, the five regular polyhedra in Figure \ref{regular:fig} have a fixed dihedral angle (the same angle on all edges), regardless of their diameter; only the cube has a nice dihedral angle that divides $2\pi$ and hence can be used to build some flat 3-manifolds as in Figure \ref{flat:fig}.

In $\matH^3$ and $S^3$ there are no similarities, and if we consider the regular polyhedra there, their dihedral angles (still the same on all edges) are not fixed and vary continuously and monotonically with the diameter; for this reason there are more regular polyhedra in $\matH^3$ and in $S^3$ than in $\matR^3$ whose dihedral angles divide $2\pi$. The angles of type $2\pi/k$ that can be obtained in $\matH^3$ for each regular polyhedron are shown in Figure \ref{regular:fig}. The ones marked in red are realised by \emph{ideal polyhedra}, that is polyhedra whose vertices all lie at infinity. These polyhedra are not compact, but they still have finite volume.

\begin{figure}
 \begin{center}
 \centering
\labellist
\small\hair 2pt
\pinlabel $\color{red}\pi/3$ at 400 -100
\pinlabel $\color{red}\pi/2$ at 1200 -100
\pinlabel $2\pi/3$ at 2000 -100
\pinlabel ${\color{red}\pi/3},2\pi/5$ at 2800 -100
\pinlabel ${\color{red}\pi/3},2\pi/5,\pi/2$ at 3700 -100
\endlabellist
  \includegraphics[width = 12.5 cm]{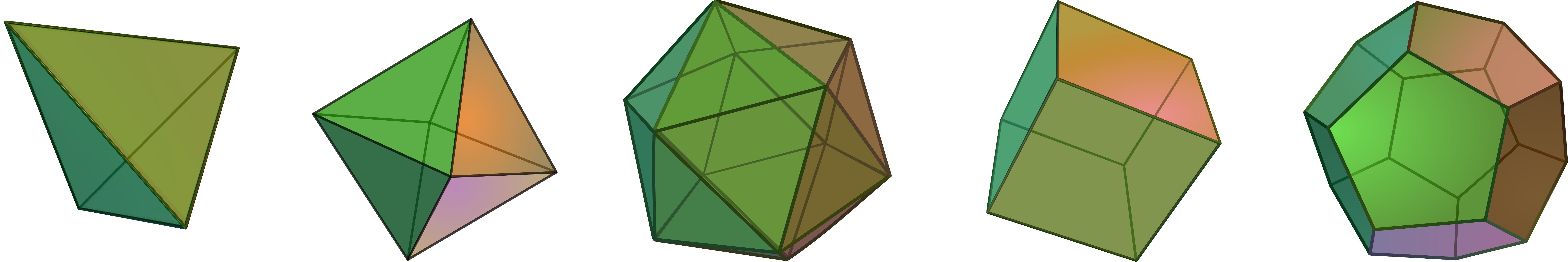}
  \vspace{.4 cm}
 \end{center}
 \caption{The five regular polyhedra may be represented in $\matH^3$ with the indicated dihedral angles. When the angles are red, the realization is via an ideal regular polyhedron.}  \label{regular:fig}
\end{figure}

\begin{figure}
 \begin{center}
 \centering
  \includegraphics[width = 3.5 cm]{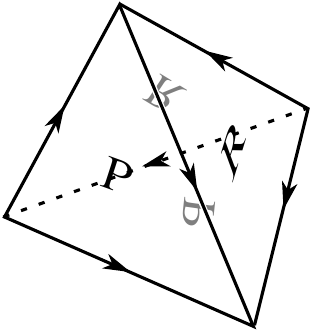}
 \end{center}
 \caption{The Gieseking hyperbolic 3-manifold is obtained by pairing the faces of a single regular ideal tetrahedron as shown here (match the letters). We can check that the pairing identifies all the 6 edges as indicated by the arrows.}  \label{Gieseking:fig}
\end{figure}

Gieseking constructed a non-compact finite volume non-orientable hyperbolic 3-manifold out of a single hyperbolic \emph{ideal regular tetrahedron} with dihedral angle $\pi/3$ in 1912, by pairing its faces isometrically as in Figure \ref{Gieseking:fig} \cite{Gies}. 
The reader can check that the matching identifies all the six edges to a single one; since the dihedral angle is $\pi/3$ we get $6\pi/3 = 2\pi$ and hence we indeed get a hyperbolic manifold: the figure is a complete proof! This non-orientable hyperbolic 3-manifold, now known as the \emph{Giesking manifold}, is the interior of a compact non-orientable 3-manifold having the Klein bottle as its boundary. Much later on, during Thurston's revolution in the 1970s, it was noted that  the orientable double cover of the Gieseking manifold is the complement in $S^3$ of the figure-eight knot shown in Figure \ref{knots:fig}.

The second hyperbolic 3-manifold in history was discovered by L\"obell \cite{Lobell} in 1931, inventing a technique that is still employed today to construct hyperbolic manifolds out of right-angled polyhedra in various dimensions, see for instance \cite{IMM}. L\"obell picked eight copies of the hyperbolic \emph{right-angled compact dodecahedron} and glued them along some face pairing. Two years later, Seifert and Weber built another compact hyperbolic 3-manifold out of a regular dodecahedron \cite{SW}, now with dihedral angles $2\pi/5$, simply by identifying every pair of opposite faces with a $3\pi/5$ twist. The compact dodecahedron with angles $2\pi/5$ is larger than the right-angled one, but smaller than the ideal one, that has angles $\pi/3$ (angles decrease when diameters increase).

More examples appeared only 40 years later. In the 1970s Riley started to construct hyperbolic structures on some knot complements in $S^3$ by looking at representations of their fundamental groups in $\PSL_2(\matC) = \Isom^+(\matH^2)$, and few years later Thurston entered the subject and rivolutionized it by showing that hyperbolic 3-manifolds are actually everywhere \cite{bibbia}. 

\subsection{$S^3$} A geometric 3-manifold locally isometric to $S^3$ is called \emph{spherical}. A spherical 3-manifold is the quotient $S^3 / \Gamma$ for some finite group $\Gamma < \SO(4)$ acting freely. Some notable examples are the \emph{lens spaces} $L(p,q)$, where $(p,q)$ is a coprime pair of natural numbers and $\Gamma$ is generated by a simultaneous rotation along two orthogonal planes of angles $2\pi/p$ and $2q\pi/p$. Lens spaces are called in this way because they can be realised geometrically by identifying the opposite faces of a \emph{lens} in $S^3$ with dihedral angle $2\pi/p$ via a $2\pi/q$ turn (a lens is a portion of $S^3$ delimited by two geodesic discs sharing the same closed geodesic as in Figure \ref{lens:fig}).

\begin{figure}
 \begin{center}
\centering
  \includegraphics[width = 4 cm]{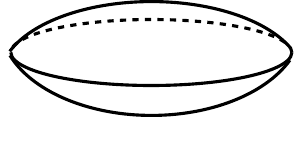}
 \end{center}
 \vspace{-.7 cm}
 \caption{A spherical lens.}  \label{lens:fig}
\end{figure}

Lens spaces are especially interesting
because, despite their relative simplicity, they display some instances of ``exoticness'' in dimension three: among lens spaces one can find homotopically equivalent 3-manifolds that are not diffeomorphic (like $L(7,1)$ and $L(7,2)$), and 3-manifolds with the same (cyclic) fundamental group that are not homotopically equivalent (like $L(5,1)$ and $L(5,2)$). As opposite to dimension four where all kinds of wild exoticness may arise (famously including manifolds that are homeomorphic and not diffeomorphic, something that is impossible in dimension three), in dimension three quite ironically these scary phenomena arise essentially only among lens spaces, and have no measurable effects on the other 3-manifolds. These facts were proved by Alexander \cite{Alexa}, Whitehead \cite{White}, and Reidemeister \cite{Reide} who introduced an invariant now called \emph{Reidemeister torsion} to distinguish between homotopy equivalence and diffeomorphism.

Another famous elliptic 3-manifold is \emph{Poincar\'e's homology sphere}, that can be realised as $S^3/I_{120}^*$ where $S^3$ is the group of unit quaternions and $I_{120}^*$ is the binary icosahedral group, or more geometrically by identifying the opposite faces of the \emph{spherical regular dodecahedron} with dihedral angles $2\pi/3$ with a $\pi/10$ turn, as discovered originally in \cite{SW}.

\subsection{$S^2 \times \matR$} This is the poorest geometry, since it concerns only two 3-manifolds, that are $S^2 \times S^1$ and its quotient by the involution $(x,e^{\theta i}) \mapsto (-x, e^{-\theta i})$.

\subsection{$\matH^2 \times \matR$} This geometry includes all the products $\Sigma \times S^1$ where $\Sigma$ is some hyperbolic surface, plus all the 3-manifolds that are covered by one of these. For instance, for every isometry $\varphi$ of some hyperbolic surface $\Sigma$ we can take $\Sigma \times [0,1]$ and then glue the two boundary components via $\varphi$. The isometry $\varphi$ has necessarily finite order and hence the resulting manifold is finitely covered by $\Sigma \times S^1$.

\subsection{$\widetilde \SL_2$} This geometry includes all the non-trivial circle bundles over surfaces $\Sigma$ with $\chi(\Sigma) < 0$, plus all the 3-manifolds that are covered by some of these. One example is the unit tangent bundle of a hyperbolic surface $\Sigma$.

\subsection{$\Nil$} This geometry includes all the non-trivial circle bundles over the torus, plus all the 3-manifolds that are covered by one of these. These circle bundles are also obtained as torus bundles over the circle with monodromy $\matr 1n01$ with $n\neq 0$.

\subsection{$\Sol$} This geometry includes all the torus bundles over the circle with monodromy $A \in \SL_2\matR$
having $|\tr A| > 2$, and the manifolds covered by these.

\section{Geometrisation in action}
We show the effect of geometrisation in a few different contexts.

\subsection{Knots}
By Alexander's theorem, the exterior $M = S^3 \setminus \nu K$ of a knot $K\subset S^3$ is a prime 3-manifold. However, it may have a non-trivial JSJ decomposition: this holds precisely when the knot $K$ is a \emph{satellite}, that is it is contained non-trivially in the closed tubular neighbourhood $\nu K'$ of another non-trival knot $K'$ (contained non-trivially means that $K$ is neither isotopic to $K'$ nor contained in a ball), see Figure \ref{satellite:fig}. In this case the boundary torus $\partial \nu K'$ is essential. Every non-prime knot is a particular kind of satellite knot.

\begin{figure}
 \begin{center}
\centering
  \includegraphics[width = 5 cm]{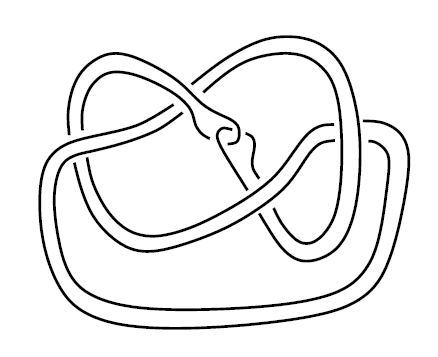}
 \end{center}
 \vspace{- .5 cm}
 \caption{A satellite knot is a knot that is contained non-trivially in the tubular neighbourhood of another non-trival knot.}  \label{satellite:fig}
\end{figure}

By geometrisation there are three types of non-trivial knots in $S^3$:
\begin{itemize}
\item the \emph{hyperbolic knots}, whose exterior is hyperbolic;
\item the \emph{torus knots}, whose exterior has geometry $\widetilde{\SL_2}$ or $\matH^2 \times \matR$;
\item the \emph{satellite knots}, whose exterior has a non-trivial JSJ decomposition.
\end{itemize}

A \emph{torus knot} is a knot contained in a standard torus: such knots are easily parametrized by pairs of coprime integers $(p,q)$. The exterior of a torus knot admits two different geometries because $\widetilde \SL_2$ and $\matH^2 \times \matR$ coexist in manifolds with non-empty boundary. This trichotomy appeared for the first time in \cite{Thu}. 

Hyperbolisation can be written succinctly as follows:

\begin{center}
\emph{Every knot that is neither a torus knot nor a satellite knot is hyperbolic.}
\end{center}

\subsection{Surface bundles}
Let a 3-manifold $M$ fibre over the circle. The fibre $\Sigma$ is a connected orientable surface with (possibly empty) boundary, and by fixing a horizontal vector field and taking its flow one finds a \emph{monodromy} $\varphi \colon \Sigma \to \Sigma$ that is well-defined only up to isotopy. We can recover $M$ from $\varphi$ by taking $\Sigma \times [0,1]$ and glueing its two boundary components via $\varphi$. We suppose that $\chi(\Sigma)<0$ since the finitely many cases $\chi(\Sigma) \geq 0$ are simpler to deal with.

Hyperbolisation in this context is easy to state:

\begin{center}
\emph{If $\varphi$ does not fix any finite set of essential curves up to isotopy, $M$ is hyperbolic.}
\end{center}

This formulation says as usual that $M$ is hyperbolic unless there is some obvious obstruction. A simple closed curve in $\Sigma$ is \emph{essential} if it does not bound a disc and it is not parallel to a boundary component of $\Sigma$. 
If $\varphi$ fixes a finite set of isotopy classes of essential curves, by sliding them along the fibration we would construct some (immersed) essential tori, and this would prevent $M$ from being hyperbolic. 

\begin{figure}
 \begin{center}
\centering
  \includegraphics[width = 12 cm]{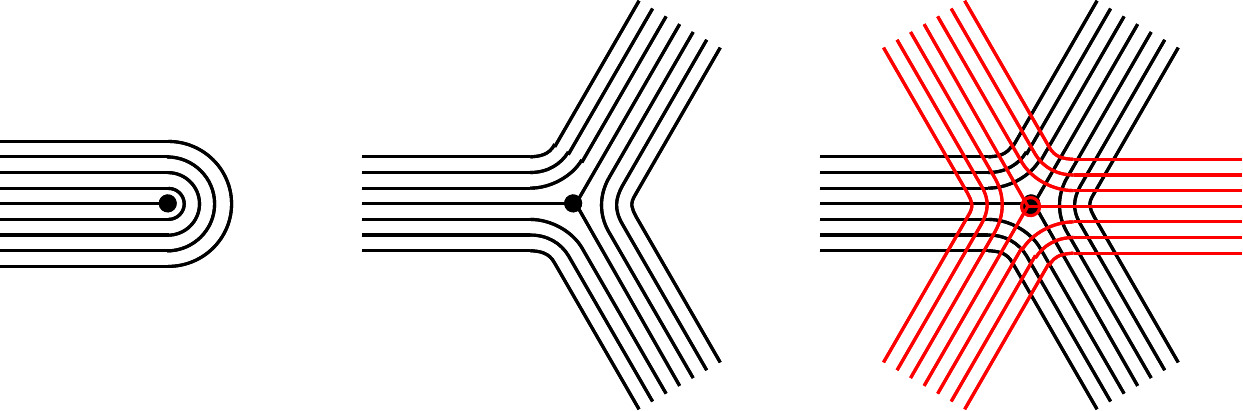}
 \end{center}
 \caption{A geodesic foliation near a singular point with angle $\pi$ (left) or $3\pi$ (center) in a flat cone surface. The angle $\pi$ is allowed only at ideal points. A metrically inaccurate sketch of two orthogonal geodesic foliations (right).}  \label{foliation:fig}
\end{figure}

Geometrisation can be formulated in a more structured way following Thurston's characterisation of the homeomorphisms of surfaces \cite{Thus}. Thurston proved that every self-homeomorphism $\varphi \colon \Sigma \to \Sigma$ can be chosed after an isotopy to be one of the following types:

\begin{enumerate}
\item \emph{finite order}, that is $\varphi^k=\id$ for some $k$;
\item \emph{reducible}, that is $\varphi$ fixes an essential curve;
\item \emph{pseudo-Anosov}.
\end{enumerate}

The types (1) and (2) are the only two that may hold simultaneously.
A \emph{pseudo-Anosov} map is a marvelous package that consists of:
\begin{itemize}
\item A \emph{cone flat structure} on the surface $\bar \Sigma$ obtained by shrinking the boundary components of $\Sigma$ to ideal points, where some points have cone angle $k\pi$ for some integer $k\neq 2$, and only the ideal points are allowed to have $k=1$;
\item Two orthogonal \emph{geodesic foliations} in $\bar \Sigma$ called \emph{stable} and \emph{unstable}, that look like Figure \ref{foliation:fig} near the singular points;
\item A homeomorphism $\varphi \colon \bar\Sigma \to \bar \Sigma$ that preserves the singular points and each foliation, and looks like an affine map away of the singular points that stretches the unstable leaves by some $\lambda > 1$ and contracts the stable ones by $1/\lambda$ (in particular it preserves areas). The number $\lambda > 1$ is called the \emph{stretching factor} of $\varphi$.
\end{itemize}

A beautiful and extensive introduction to pseudo-Anosov maps is contained in Farb -- Margalit \cite{FM}. Let $M$ be a 3-manifold fibering over the circle with monodromy $\varphi$.
The geometrisation type for $M$ depends elegantly on the type (1), (2), or (3) of $\varphi$ stated above, as follows. The manifold $M$ is always prime, and correspondingly to the type of $\varphi$ we have

\begin{enumerate}
\item $M$ has geometry $\matH^2 \times \matR$;
\item $M$ contains an essential torus;
\item $M$ is hyperbolic.
\end{enumerate}

Coherently with what said above only (1) and (2) may coexist. In case (2) we know that $\varphi$ fixes an essential curve $\gamma$, and to recover the full geometric decomposition of $M$ one should cut $\Sigma$ along $\gamma$ and proceed inductively. 

The correspondence between the three types of $\varphi$ and the three possible geometric outcomes for $M$ is among the most beautiful manifestations of geometrisation of 3-manifolds, and was proved by Thurston \cite{Thuf} before Perelman's more general accomplishment. A standard complete reference is Otal \cite{O}. 
The strong part in this correspondence is 
\begin{center}
\emph{$M$ hyperbolic $\Longleftrightarrow$ $\varphi$ pseudo-Anosov}.
\end{center}

It is worth noting that neither of the two implications is easy to prove: we cannot jump directly from the pseudo-Anosov map $\varphi$ to the hyperbolic structure on $M$, nor conversely from the hyperbolic structure of $M$ to the pseudo-anosov map $\varphi$.

One may wonder how general this construction is, since \emph{a priori} there could be only very few 3-manifolds that fibre over the circle. A recent celebrated breakthrough of Agol \cite{Agol}, obtained with fundamental contribution of Wise \cite{Wise}, shows that there are plenty of them, since (answering a famous question of Thurston \cite{Thu}) every hyperbolic 3-manifold has a finite cover that fibres over the circle.

\subsection{Polyhedra}
The first geometrisation of some 3-dimensional objects in history is Andreev's theorem \cite{Andreev, Andreev2}, published in 1970, that says that 

\begin{center}
\emph{Every abstract polyhedron with some non-obtuse angles assigned to its edges \\ is hyperbolic unless there are some obvious obstructions.}
\end{center}

\begin{figure}
 \begin{center}
\centering
\labellist
\small\hair 2pt
\pinlabel $\alpha_1$ at -10 70
\pinlabel $\alpha_2$ at 30 40
\pinlabel $\alpha_3$ at 95 95
\pinlabel $\alpha_1$ at 110 70
\pinlabel $\alpha_2$ at 150 40
\pinlabel $\alpha_3$ at 255 60
\pinlabel $\alpha_4$ at 215 95
\pinlabel $\alpha_1$ at 270 70
\pinlabel $\alpha_2$ at 310 40
\endlabellist
  \includegraphics[width = 10 cm]{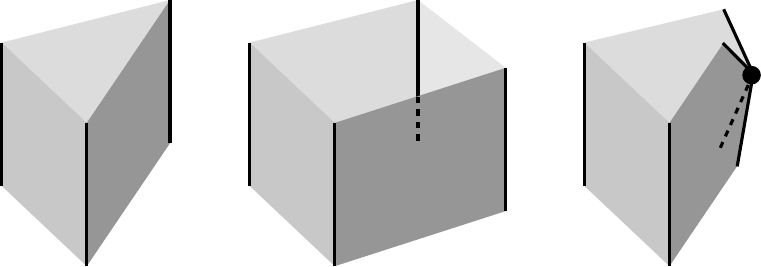}
 \end{center}
 \caption{Some configurations of faces in a polyhedron $P$. In the left (center) figure we suppose that the 6 (8) endpoints of the 3 (4) edges cointaining the labels $\alpha_i$ are all distinct (these configurations are called a \emph{3-circuit} and a \emph{4-circuit}). In the right figure we suppose that the left face does not contain the right vertex.} \label{Andreev:fig}
\end{figure}

The original proof of Andreev contained a gap that was then fixed by Roeder \cite{Roeder}. Here we consider an abstract polyhedron $P$ with vertices of valence 3 and 4, different from a tetrahedron and a prism, and we assign abstract non-obtuse dihedral angles $0 < \alpha_i \leq \pi/2$ to its edges $e_i$. Andreev and Roeder say that $P$ can always be realised (uniquely up to isometries) as a honest finite-volume polyhedron in $\matH^3$ with these prescribed dihedral angles (with the 4-valent vertices at infinity), except when at least one of these obstructions occur:

\begin{enumerate}
\item $\alpha_i+\alpha_j+\alpha_k < \pi$ at some 3-valent vertex;
\item $\alpha_i +\alpha_j+\alpha_k+\alpha_l < 2\pi$ at some 4-valent vertex;
\item $\alpha_1 + \alpha_2 + \alpha_3 \geq \pi$ for some triple of faces as in Figure \ref{Andreev:fig}-(left);
\item $\alpha_1 + \alpha_2 + \alpha_3 +\alpha_4 \geq 2\pi$ for some quadruple of faces as in Figure \ref{Andreev:fig}-(center);
\item $\alpha_1+\alpha_2 \geq \pi$ for some triple of faces as in Figure \ref{Andreev:fig}-(right).
\end{enumerate}

All these obstructions are obvious in the sense that one can easily prove that they cannot arise on a hyperbolic polyhedron. The obstructions (2), (4), (5) are equivalent to $(\alpha_i, \alpha_j, \alpha_k, \alpha_l) \neq (\pi/2,\pi/2,\pi/2,\pi/2)$, $(\alpha_1,\alpha_2,\alpha_3,\alpha_4) = (\pi/2,\pi/2,\pi/2,\pi/2)$, $(\alpha_1,\alpha_2) = (\pi/2,\pi/2)$ respectively.

Thurston used this theorem in his proof of geometrisation for prime 3-manifolds that contain an incompressible surface (such 3-manifolds are called \emph{Haken}). When all the angles involved divide $\pi$, the polyhedron $P$ is a \emph{Coxeter polyhedron} and can be used to produce hyperbolic manifolds that are tessellated into copies of $P$.

\subsection{Branched coverings}
Let $L\subset S^3$ be a link with $k$ components. By Alexander's duality the group $H_1(S^3 \setminus L) = \matZ^k$ is generated by some small closed oriented curves encircling the components of $L$, called \emph{meridians}. We can construct a double covering of $S^3 \setminus L$ from the map $H_1(S^3 \setminus L) \to \matZ/2\matZ$ that sends all these meridians to 1, and extend it to a \emph{branched double covering} $M \to L$ by adding a copy of $L$ to the covering space. The 3-manifold $M$ is without boundary and is called the \emph{double branched covering} of $S^3$ ramified above $L$. As $L$ varies, with this method we get plenty of 3-manifolds $M$ whose topology can often be guessed by looking at $L$. This gives a useful visual way to build and study many 3-manifolds $M$ without boundary. Geometrisation as usual says:

\begin{center}
\emph{
The branched double covering of $S^3$ ramified over $L$ is hyperbolic \\ unless there are some obvious obstructions.
}
\end{center}

The obvious obstructions are:
\begin{enumerate}
\item $L$ is split;
\item $L$ is not prime;
\item $L$ contains a Conway sphere; 
\item $S^3 \setminus L$ contains an essential torus;
\item $L$ is a torus link;
\item $L$ is a Montesinos link of length $k\leq 3$.
\end{enumerate}

\begin{figure}
 \begin{center}
\centering
\labellist
\small\hair 2pt
\pinlabel $(1)$ at 350 480
\pinlabel $(2)$ at 760 480
\pinlabel $(3)$ at 350 300
\pinlabel $(4)$ at 540 300
\pinlabel $(5)$ at 760 300
\pinlabel $(6)$ at 760 0
\endlabellist
  \includegraphics[width = 10 cm]{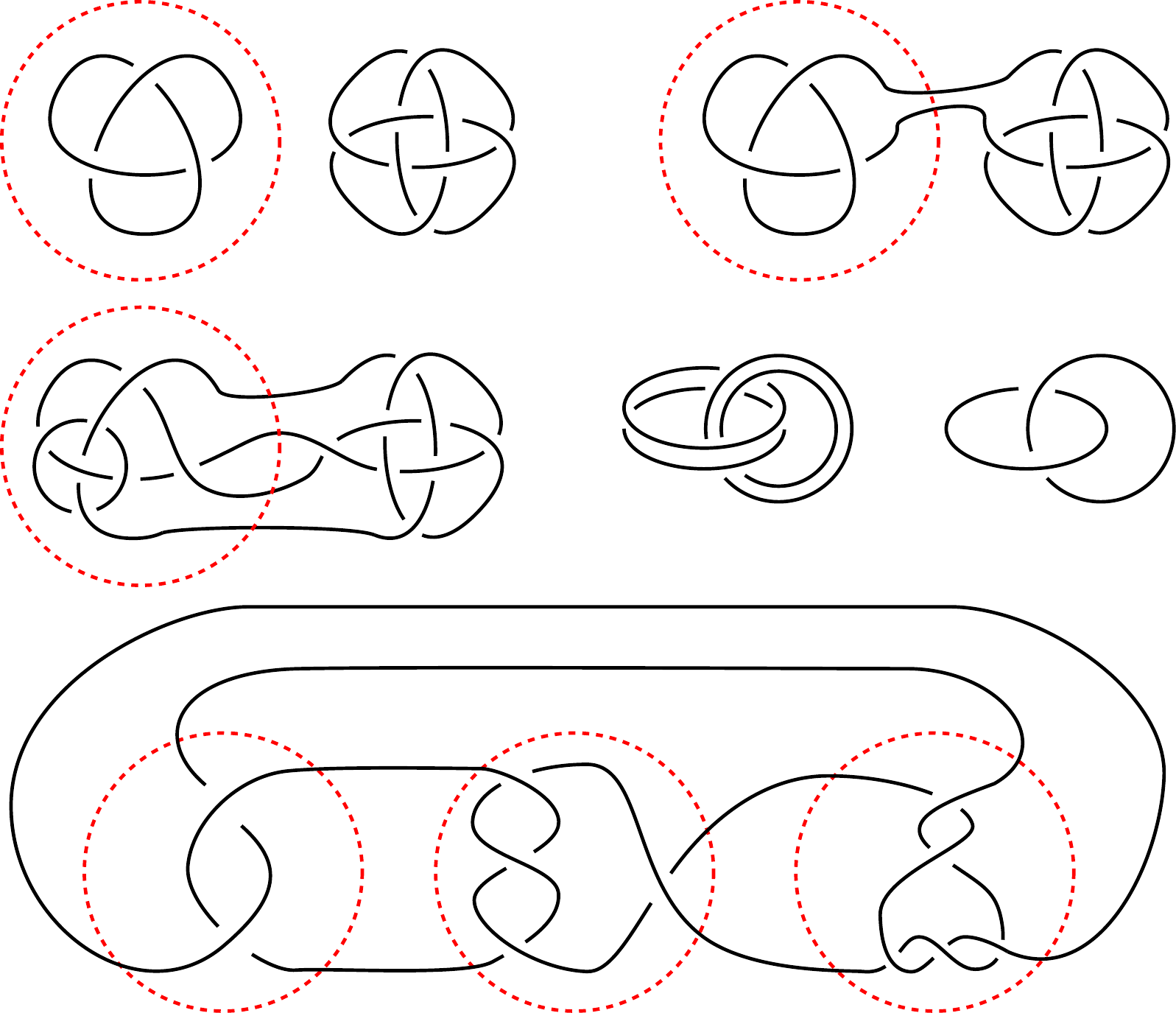}
 \end{center}
 \caption{Obstructions for a double covering to be hyperbolic. Every dashed circle denotes a sphere intersecting the link in 0, 2, or 4 points.}  \label{link_obstructions:fig}
\end{figure}

Some examples of the six types of obstructions are shown in Figure \ref{link_obstructions:fig}.
A link $L\subset S^3$ is \emph{split} is there is a sphere $S \subset S^3$ disjoint from $L$ that separates $S^3$ into two discs $D,D'$, each containing some components of $L$. A \emph{Conway sphere} is a sphere $S$ intersecting $L$ transversely in four points, that separates $S^3$ into two discs $D, D'$, such that neither of the four arcs in $D \cap L$ or  $D' \cap L$ can be isotoped to $\partial D$ or $\partial D'$ with endpoints fixed without crossing the rest of $L$ (there is no simple way to reduce the intersection $S\cap L$). Properties (1), (2), (3) can be summarised by saying that there is a sphere intersecting $L$ essentially in at most $4$ points.

A \emph{torus link} is a link entirely contained in a standard unknotted torus in $S^3$. A \emph{Montesinos link} of length $k$ is a more complicated link as in Figure \ref{link_obstructions:fig}, with $k$ spheres in a row ($k=3$ in the figure), that look like Conway spheres but they are actually not, because the arcs contained in the discs can actually be isotoped into these spheres in a non-obvious way (these portions are called \emph{rational tangles}, and each can be described via a rational number $p/q$).

Each of these obstructions (1), \ldots, (6) lifts to an obstruction for the hyperbolicity of  the branched double covering $M$. The preimage in $M$ of the sphere in (1), (2), (3) consists respectively of two essential spheres, one essential sphere, and one essential torus. The preimage in $M$ of the essential torus in (4) consists of either one or two essential tori.
In the remaining cases (5) and (6) the branched covering is well-known and it has some geometry $S^3, \matH^2 \times \matR,$ or $\widetilde {\SL_2}$.

The stated hyperbolization for double branched covers is the result of many contributors, starting from Montesinos  \cite{Monte} and ending with Thurston's geometrisation of orbifolds, whose complete proof is contained in Boileau -- Leeb -- Porti \cite{BLP}. See Bonahon and Siebenmann \cite{BS} for an overview.

\subsection{Dehn surgery}
Let $L\subset S^3$ be a link. A closed regular neighbourhood of $L$ consists of a disjoint union of solid tori, one for each component of $L$ as in Figure \ref{surfaces:fig}. 
A \emph{Dehn surgery} on $L$ is the operation that consists of removing each solid torus, thus creating the exterior $M = S^3 \setminus \nu L$, and gluing it back along a different map. 
The result is a new manifold $N$ without boundary. 

The isotopy class of every regluing map is uniquely determined by a pair $(p,q)$ of coprime integers that encode the isotopy class of the image of the meridian $D \times \{{\rm pt}\}$ of the solid torus $D \times S^1$ in the boundary of the regular neighbourhood of the component of $L$. We have here a combinatorial way of describing many manifolds $N$, out of a link $L$ and some rational numbers $p/q$ attached to its components.
Lickorish \cite{Li} and Wallace \cite{W} proved independently that in fact \emph{every} 3-manifold $N$ without boundary may be constructed in this way. For instance, if we attach the number $0$ to the three components of the Borromean rings in Figure \ref{Borromean:fig}, we get the 3-torus $N= S^1\times S^1 \times S^1$, and if we attach the number 3 to the figure-eight knot, we get the Hantsche -- Wendt flat 3-manifold.

This combinatorial presentation for $N$ can be given to the extremely efficient and easy to use program SnapPy \cite{SnapPy}, that tries to find a hyperbolic structure for $N$, and in the frequent cases where it succeeds, it calculates many interesting geometric invariants of $N$.

In contrast with the previous contexts (knot complements, polyhedra, and double branched coverings), here there is no nice theorem that, given a link $L$ and some rational numbers attached to its components, says whether the resulting manifold $N$ is hyperbolic or not. Thurston's hyperbolic Dehn filling theorem \cite{bibbia} says 

\begin{center}
\emph{If $L$ has hyperbolic exterior and the rational numbers \\ are not contained in some finite set $S \subset \matQ$, then $N$ is hyperbolic.}
\end{center}

The finite set $S$ depends on $L$. For instance, the figure eight knot has hyperbolic exterior, and if we pick any number $p/q \not \in S = \{-4,-3,-2,-1,0,1,2,3,4,\infty\}$, the manifold $N$ is hyperbolic: this famous example was described by Thurston \cite{bibbia}. If we assign a number $p/q \in S$, we get a manifold of some other geometries, or with a non-trivial decomposition.

\begin{figure}
 \begin{center}
\centering
  \includegraphics[width = 4 cm]{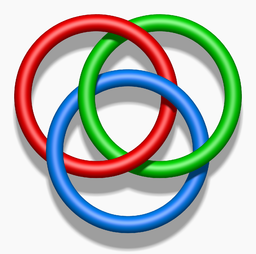}
 \end{center}
 \caption{The Borromean rings. By assigning the label 0 to each component we get the 3-torus.}  \label{Borromean:fig}
\end{figure}

\bibliographystyle{alpha}
\bibliography{biblio}

\end{document}